\newtheorem{theorem}{Theorem}[section]
\newtheorem{lemma}[theorem]{Lemma}
\newtheorem{proposition}[theorem]{Proposition}
\newtheorem{corollary}[theorem]{Corollary}
\theoremstyle{definition}
\newtheorem{definition}[theorem]{Definition}
\newtheorem{example}[theorem]{Example}
\theoremstyle{remark}
\newtheorem{remark}[theorem]{Remark}
\numberwithin{equation}{section}
\newfont{\kh}{msbm10}
\begin{document}
\title[The Gap between unbounded regular operators]
{The Gap between unbounded Regular operators}
\author{K. Sharifi}
\address{Kamran Sharifi, \newline Department of Mathematics,
Shahrood University of Technology, P. O. Box 3619995161-316,
Shahrood, Iran} \email{sharifi.kamran@gmail.com and
sharifi@shahroodut.ac.ir}

\subjclass[2000]{Primary 46L08; Secondary 47L60, 47B50, 46C05.}
\keywords{Hilbert C*-module, unbounded operators, gap topology,
C*-algebras of compact operators, Fredholm operators.}

\begin{abstract}
We study and compare the gap and the Riesz topologies of the space
of all unbounded regular operators on Hilbert C*-modules. We show
 that the space of all bounded adjointable operators on Hilbert
C*-modules is an open dense subset of the space of all unbounded
regular operators with respect to the gap topology. The
restriction of the gap topology on the space of all bounded
adjointable operators is equivalent with the topology which is
generated by the usual operator norm. The space of regular
selfadjoint Fredholm operators on Hilbert C*-modules over the
C*-algebra of compact operators is path-connected with respect to
the gap topology, however, the result may not be true for some
Hilbert C*-modules.
\end{abstract}
\maketitle

\section{Introduction.}

Hilbert C*-modules are essentially objects like Hilbert spaces,
except that inner product, instead of being complex-valued, takes
its values in a C*-algebra. The theory of these modules, together
with bounded and unbounded operators, is not only rich and
attractive in its own right but forms an infrastructure for some
of the most important research topics in operator algebras. They
play an important role in the modern theory of C*-algebra, in
KK-theory, in noncommutative geometry and in quantum groups.

A (left) {\it pre-Hilbert C*-module} over a C*-algebra
$\mathcal{A}$ is a left $\mathcal{A}$-module $E$ equipped with an
$\mathcal{A}$-valued inner product $\langle \cdot , \cdot \rangle
: E \times E \to \mathcal{A}\,, \ (x,y) \mapsto \langle x,y
\rangle$, which is $\mathcal A$-linear in the first variable $x$
and has the properties:
$$ \langle x,y \rangle=\langle y,x \rangle ^{*}, \ \langle ax,y \rangle
=\langle x,y \rangle \, \ {\rm for} \ {\rm all} \ a \ {\rm in} \
\mathcal{A},$$ $$ \langle x,x \rangle \geq 0 \ \ {\rm with} \
   {\rm equality} \ {\rm only} \ {\rm when} \  x=0.$$

A pre-Hilbert $\mathcal{A}$-module $E$ is called a {\it Hilbert $
\mathcal{A}$-module} if $E$ is a Banach space with respect to the
norm $\| x \|=\|\langle x,x\rangle \| ^{1/2}$. If $E$, $F$ are
two Hilbert $ \mathcal{A}$-modules then the set of all ordered
pairs of elements $E \oplus F$ from $E$ and $F$ is a Hilbert
$\mathcal{A}$-module with respect to the $\mathcal A$-valued
inner product $\langle (x_{1},y_{1}),(x_{2},y_{2})\rangle=
\langle x_{1},x_{2}\rangle_{E}+\langle y_{1},y_{2}\rangle _{F}$.
It is called the direct {\it orthogonal sum of $E$ and $F$}. A
Hilbert $\mathcal A$-submodule $E$ of a Hilbert $\mathcal
A$-module $F$ is an orthogonal summand if $E \oplus E^\bot = F$,
where $E^\bot$ denotes the orthogonal complement of $E$ in $F$.
The papers \cite{FR2, FR3, SCH}, some chapters in \cite{GVF, WEG},
and the books by E. C. Lance \cite{LAN} are used as standard
sources of reference.

As a convention, throughout the present paper we assume
$\mathcal{A}$ to be an arbitrary C*-algebra (i.e. not necessarily
unital), we also assume $\mathcal{K}(H)$ to be the C*-algebra of
all compact operators on an arbitrary Hilbert space $H$. Since we
deal with bounded and unbounded operators at the same time we
simply denote bounded operators by capital letters and unbounded
operators by small letters. We use the notations $Dom(.)$,
$Ker(.)$ and $Ran(.)$ for domain, kernel and range of operators,
respectively.

Suppose $E,\ F$ are Hilbert $\mathcal{A}$-modules. We denote the
set of all $\mathcal{A}$-linear maps $T: E \to F$ for which there
is a map $T^*: F \to E$  such that the equality $\langle Tx,y
\rangle = \langle x,T^*y \rangle$ holds for any $ x \in E,\ y \in
F$ by $B(E,F)$. The operator $T^*$ is called the {\it adjoint
operator} of $T$. $B(E,E)$ is denoted by $B(E)$.

Unbounded regular operators were first introduced by Baaj and
Julg in \cite{B-J} and later they were studied more by Woronowicz
and Napi\'{o}rkowski in \cite{W-N, WOR}. Lance gave a brief
indication in his book \cite{LAN} about unbounded regular
operators on Hilbert C*-modules. An operator $t$ from a Hilbert
$\mathcal{A}$-module $E$ to another Hilbert $\mathcal{A}$-module
$F$ is said to be {\it regular} if
\newcounter{cou001}
\begin{list}{(\roman{cou001})}{\usecounter{cou001}}
\item $t$ is closed and densely defined,
\item its adjoint $t^*$ is also densely defined, and
\item the range of $1+t^*t$ is dense in $E$.
\end{list}

Note that if we set $\mathcal{A}= \mathbb{C}$ i.e. if we take $E$
to be a Hilbert space, then this is exactly the definition of a
densely defined closed operator, except that in that case, both
the second and the third condition follow from the first one. In
the frame work of Hilbert C*-modules, one needs to add these
extra conditions in order to get a reasonably good theory. The
reader is encouraged to study the publications \cite{F-S, FS2, KK,
PAL1} for more detailed information about unbounded operators on
Hilbert C*-modules.

The {\it gap topology} is induced by the metric $d(t,s)=
\|P_{G(t)}-P_{G(s)} \|$ where $P_{G(t)}$ and $P_{G(s)}$ are
projections onto the graphs of densely defined closed operators
$t, s$, respectively. The gap topology on the space of all densely
defined closed operators has been studied systematically in the
book \cite{KAT} and in the seminal paper by Cordes and Labarousse
in \cite{C-L}. Recently the gap topology on the space of all
unbounded selfadjoint Fredholm operators has been reconsidered in
\cite{BLP, JOA, NIC}.

We study the gap topology of the space of all unbounded regular
operators on arbitrary Hilbert C*-modules. We also introduce an
strictly stronger topology than the gap topology of the space of
all unbounded regular operators, which is called Riesz topology.
We show that the space of all bounded adjointable operators on
Hilbert C*-modules is an open dense subset of the space of all
unbounded regular operators with respect to the gap topology.
Moreover the restriction of the gap topology on the space of all
bounded adjointable operators is equivalent with the topology
which is generated by the usual operator norm. The gap metric
will help us to find an isometric operation preserving map of the
space of all densely defined closed operators on Hilbert
C*-modules over the C*-algebra of compact operators onto the
space of all densely defined closed operators on a suitable
Hilbert space. This fact together with a result of Cordes and
Labarousse \cite{C-L} give us the opportunity to characterize the
path-connected components of the space of unbounded regular
Fredholm operators on Hilbert $\mathcal{K}(H)$-modules with
respect to the gap topology. Indeed, every two unbounded regular
Fredholm operators are homotopic if and only if they have the
same index.

\section{Preliminaries}

In this section we would like to recall some definitions and
simple facts about regular operators on Hilbert
$\mathcal{A}$-modules. For details see chapters 9 and 10 of
\cite{LAN}, and the papers \cite{F-S, KK, W-N, WOR}. Then we will
introduce and compare the Riesz and gap topologies of the space of
all unbounded regular operators.

Let $E, F$  be Hilbert $\mathcal{A}$-modules, we will use the
notation $t : Dom(t) \subseteq E \to F$ to indicate that $t$ is
an $\mathcal{A}$-linear operator whose domain $Dom(t)$ is a dense
submodule of $E$ (not necessarily identical with $E$) and whose
range is in $F$. A densely defined operator $t: Dom(t) \subseteq
E \to F$ is called {\it closed} if its graph $G(t)=\{(x,tx): \ x
\in Dom(t)\}$ is a closed submodule of the Hilbert
$\mathcal{A}$-module $E \oplus F$. If $t$ is closable, the
operator $s : Dom(s) \subseteq E \to F$ with the property $G(s)=
\overline{G(t)}$ is called the {\it closure} of $t$ denoted by
$s= \overline{t}$. A densely defined operator $t: Dom(t)
\subseteq E \to F$ is called {\it adjointable} if it possesses a
densely defined map $t^*: Dom(t^*) \subseteq F \to E$ with the
domain
$$
   Dom(t^*)= \{y \in F : {\rm there} \ {\rm exists} \ z \in E \
   {\rm such} \ {\rm that} \ \langle tx,y \rangle _{F} = \langle
   x , z \rangle _{E} \ {\rm for} \ {\rm any} \ x \in Dom(t) \}
$$
which satisfies the property $\langle tx,y \rangle_{F} = \langle
x,t^*y \rangle_{E}$,  for any $x \in Dom(t), \ y \in Dom(t^*)$.
This property implies that $t^*$ is a closed $\mathcal{A}$-linear
map. A densely defined closed $\mathcal{A}$-linear map $t: Dom(t)
\subseteq E \to F$ is called {\it regular} if it is adjointable
and the operator $1+t^*t$ has a dense range. We denote the set of
all regular operators from $E$ to $F$ by $R(E,F)$. $R(E,E)$ is
denoted by $R(E)$. A densely defined operator $t$ is regular if
and only if its graph is orthogonally complemented $E \oplus F$
(cf.~ \cite[Corollary 3.2]{F-S}). If $t$ is regular then $t^*$ is
regular and $t=t ^{**}$, moreover $t^*t$ is regular and
selfadjoint. Define $Q_{t}=(1+t^*t)^{-1/2}$,
$R_{t}=(1+t^*t)^{-1}=Q_{t}^{2}$ and $F_{t}=tQ_{t}$, then
$Ran(Q_{t})=Dom(t)$,  $0 \leq Q_{t} \leq 1$ in $B(E,E)$ and
$F_{t}\in B(E,F)$ (cf.~\cite{LAN}, chapter 9). The bounded
operator $F_{t}$ is called the bounded transform (or Woronowicz
transform) of the regular operator $t$. The map $t\to F_{t}$
defines a bijection
$$
R(E,F) \to \{ T \in B(E,F):\| T \|\leq 1 \ \, {\rm and} \ \
Ran(1- T^* T ) \ {\rm is} \ {\rm dense} \ {\rm in} \ F \},
$$
(cf. \cite[Theorem 10.4]{LAN}). This map is adjoint-preserving,
i.e. $F_{t}^*=F_{t^*}$, and for the bounded transform
$F_{t}=tQ_{t}=t(1+t^*t)^{-1/2}$ we have $\|F_{t}\|\leq 1$ and $
t=F_{t}Q_{t}^{-1}=F_{t}(1-F_{t}^*F_{t})^{-1/2}. $ A regular
operator $t \in R(E)$ is called selfadjoint if $t^*=t$. Obviously
a regular operator $t$ is selfadjoint if and only if its bounded
transform $F_t$ is selfadjoint.
\begin{corollary}Let $T \in R(E,F)$ be a regular operator
and $F_{T}$ be its bounded transform. Then $T \in B(E,F)$ if and
only if  $ \| F_{T} \|< 1$.
\end{corollary}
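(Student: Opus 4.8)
The plan is to reduce the whole statement to the single identity
$1 - F_T^*F_T = Q_T^2 = (1+T^*T)^{-1}$, which links the norm of the bounded
transform to the spectrum of $1+T^*T$. I would first establish this identity.
Using that $Q_T=(1+T^*T)^{-1/2}$ commutes with $T^*T$ through the functional
calculus for the positive regular operator $T^*T$, one computes
$F_T^*F_T = Q_T T^*T Q_T = T^*T(1+T^*T)^{-1} = 1-(1+T^*T)^{-1} = 1-Q_T^2$;
equivalently, this may simply be read off from the recorded chain
$T = F_T Q_T^{-1} = F_T(1-F_T^*F_T)^{-1/2}$, which forces
$Q_T = (1-F_T^*F_T)^{1/2}$, i.e. $Q_T^2 = 1-F_T^*F_T$.

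For the direction $\|F_T\|<1 \Rightarrow T\in B(E,F)$: from $\|F_T\|<1$ I get
$\|F_T^*F_T\| = \|F_T\|^2 < 1$, so the positive element $F_T^*F_T$ of the
\emph{unital} C*-algebra $B(E)$ satisfies $F_T^*F_T \leq \|F_T\|^2\,1$, whence
$Q_T^2 = 1 - F_T^*F_T \geq (1-\|F_T\|^2)\,1 > 0$. A positive element bounded
below by a positive multiple of the identity is invertible in $B(E)$, so $Q_T$,
being its positive square root, is invertible with $Q_T^{-1}\in B(E)$. Since
$Ran(Q_T)=Dom(T)$, invertibility of $Q_T$ forces $Dom(T)=E$, and the
factorization $T=F_T Q_T^{-1}$ then exhibits $T$ as a product of two bounded
adjointable operators; hence $T\in B(E,F)$. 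This route deliberately avoids any
appeal to a closed-graph or open-mapping principle.

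For the converse $T\in B(E,F) \Rightarrow \|F_T\|<1$: if $T$ is bounded
adjointable then $T^*T\in B(E)$ is positive with $\|T^*T\|=\|T\|^2$, so the
spectrum of $1+T^*T$ lies in $[1,\,1+\|T\|^2]$ and that of
$R_T=(1+T^*T)^{-1}=Q_T^2$ in $[(1+\|T\|^2)^{-1},\,1]$. By the identity,
$F_T^*F_T = 1-Q_T^2$ has spectrum in $[0,\,\|T\|^2(1+\|T\|^2)^{-1}]$, giving
$\|F_T\|^2 = \|F_T^*F_T\| \leq \|T\|^2(1+\|T\|^2)^{-1} < 1$. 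Together with the
always-valid bound $\|F_T\|\leq 1$, the two implications yield the claimed
equivalence.

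The only genuinely delicate step is the first one: the identity
$F_T^*F_T = 1-Q_T^2$ must be justified for the \emph{unbounded} $T$, which rests
on the functional calculus making $Q_T$ commute with $T^*T$ and on interpreting
$T=F_T Q_T^{-1}$ correctly on $Dom(T)=Ran(Q_T)$. Once that is in place,
everything reduces to elementary spectral estimates in the unital C*-algebra
$B(E)$, so I anticipate no further obstacle.
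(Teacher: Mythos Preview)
Your argument is correct and matches the paper's approach: both hinge on the identity $Q_T^2=1-F_T^*F_T$ together with $Ran(Q_T)=Dom(T)$, so that $T\in B(E,F)\Leftrightarrow Dom(T)=E\Leftrightarrow Q_T$ is invertible in $B(E)\Leftrightarrow \|F_T\|<1$. The paper states this chain in a single sentence, while you supply the explicit spectral estimates (e.g.\ $\|F_T\|^2\le \|T\|^2/(1+\|T\|^2)$) and the invertibility-from-below argument, but the route is the same.
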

\begin{proof}For each $T \in R(E,F)$ the bounded adjointable operator
$Q_{T}:E \rightarrow Ran(Q_T)=Dom(T)\subseteq E$ is invertible
and satisfies $Q_{T}^2=1-F_{T}^*F_{T}$. Therefore $T \in B(E,F)$
if and only if $Dom(T)=E$, if and only if $ \| F_{T} \|< 1$.
\end{proof}

There is a natural metric on the set of regular operators, the
so-called gap metric. Let $t \in R(E,F)$ then $E \oplus F=G(t)
\oplus V(G(t^*))$, where $V \in B(E \oplus F,F \oplus E )$ is
defined by $V(x,y)=(y,-x)$. The orthogonal projection $P_{G(t)}:E
\oplus F \rightarrow E \oplus F$ can be described through the
following matrix
\begin{equation} \label{projection}
P_{G(t)}=\begin{pmatrix} R_{t}& t^*R_{t^*}\\ tR_{t} &
1-R_{t^*}\end{pmatrix} \in B(E \oplus F).
\end{equation}
It follows from \cite[(9.7)]{LAN} and the equalities $F_t
F_{t}^*=1-R_t^*$ and $(F_t Q_t)^*=(tR_{t})^*=t^*R_{t^*}$.
\begin{definition}Let $t,s \in R(E,F)$ then the gap metric on the
space of all unbounded regular operators is defined by $
d(t,s)=\|P_{G(t)}-P_{G(s)} \|$ where $P_{G(t)}$ and $P_{G(s)}$
are orthogonal projections onto $G(t)$ and $G(s)$, respectively.
The topology induced by this metric is called gap topology.
\end{definition}

Let $E$, $F$ be two Hilbert $\mathcal{A}$-modules and operators
$t,s$ be in $R(E,F)$. An equivalent picture of the gap metric is
now definable by using (\ref{projection}) as well as the fact
that $(tR_{t})^*=t^*R_{t^*}$. Indeed, the following metric, which
is again denoted by $d$, is uniformly equivalent to the gap metric
\begin{equation} \label{gapone}
d(t,s)= {\rm sup} \{ \, \|R_{t}-R_{s} \|,\|R_{t^*}-R_{s^*}
\|,\|tR_{t}-sR_{s} \|  \, \}.
\end{equation}

\begin{remark}Let $t \in R(E,F)$ be a regular operator and $F_{t}$
be its bounded transform. For every bounded adjointable operator
$S$ in $B(E,F)$ of norm $\| S \| \leq 1$ the operator $
\mathcal{F}(S):=1-S^*S$ is a positive operator. we also have
\begin{align*}
R_{t}~&=Q_{t}^2=1-F_{t}^*F_{t}=\mathcal{F}(F_{t})\,,\\
R_{t^*}&=Q_{t^*}^2=1-F_{t}F_{t}^*=\mathcal{F}(F_{t}^*)\,,\\
tR_{t}&=tQ_{t}^2=F_{t}Q_{t}=F_{t}\mathcal{F}(F_{t})^{1/2}.
\end{align*}
Therefore we can reformulate the gap metric (\ref{gapone}) via
the bounded transforms of regular operators $t$ and $s$ as
follows:
\begin{equation} \label{gaptwo}
d(t,s)= {\rm sup} \{ \, \|
\mathcal{F}(F_{t})-\mathcal{F}(F_{s})\|,
\|\mathcal{F}(F_{t}^*)-\mathcal{F}(F_{s}^*)\|,
\|F_{t}\mathcal{F}(F_{t})^{1/2}-F_{s}\mathcal{F}(F_{s})^{1/2}\| \,
\}.
\end{equation}
\end{remark}

The Riesz topology of unbounded selfadjoint operators on Hilbert
spaces has been investigated in \cite{BLP, KAU, NIC}. Their works
motivate us for the following definition.

\begin{definition}Let $t,s \in R(E,F)$ then the Riesz metric on the
space of all unbounded regular operators is defined by $ \sigma
(t,s)=\|F_{t}-F_{s} \|$. The topology induced by this metric is
called Riesz topology.
\end{definition}

\begin{lemma}The Riesz topology is stronger than the gap topology
on $R(E,F)$.
\end{lemma}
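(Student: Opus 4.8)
The plan is to show that convergence in the Riesz metric forces convergence in the gap metric, which is exactly the assertion that the identity map $(R(E,F),\sigma)\to(R(E,F),d)$ is (uniformly) continuous. Thus it suffices to bound $d(t,s)$ by a function of $\sigma(t,s)=\|F_{t}-F_{s}\|$ that vanishes as $\sigma(t,s)\to 0$. For this I would work with the reformulation (\ref{gaptwo}) of the gap metric through the bounded transforms, so that the whole problem reduces to estimating the three quantities $\|\mathcal{F}(F_{t})-\mathcal{F}(F_{s})\|$, $\|\mathcal{F}(F_{t}^{*})-\mathcal{F}(F_{s}^{*})\|$ and $\|F_{t}\mathcal{F}(F_{t})^{1/2}-F_{s}\mathcal{F}(F_{s})^{1/2}\|$ in terms of $\|F_{t}-F_{s}\|$, keeping in mind the standing estimates $\|F_{t}\|\leq 1$ and $\|F_{s}\|\leq 1$.

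For the first two quantities I would use the telescoping identity
$$F_{s}^{*}F_{s}-F_{t}^{*}F_{t}=F_{s}^{*}(F_{s}-F_{t})+(F_{s}-F_{t})^{*}F_{t},$$
together with submultiplicativity of the norm and $\|F_{t}^{*}\|=\|F_{t}\|\leq 1$, $\|F_{s}\|\leq 1$. This gives
$$\|\mathcal{F}(F_{t})-\mathcal{F}(F_{s})\|=\|F_{s}^{*}F_{s}-F_{t}^{*}F_{t}\|\leq 2\,\|F_{t}-F_{s}\|=2\,\sigma(t,s),$$
and the identical argument applied to $F_{t}^{*},F_{s}^{*}$ (noting $\|F_{t}^{*}-F_{s}^{*}\|=\|F_{t}-F_{s}\|$) bounds the second quantity by $2\,\sigma(t,s)$ as well. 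So the first two terms are even Lipschitz in $\sigma$.

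The third term is the main obstacle, and the one carrying all the subtlety. Writing
$$\|F_{t}\mathcal{F}(F_{t})^{1/2}-F_{s}\mathcal{F}(F_{s})^{1/2}\|\leq\|(F_{t}-F_{s})\mathcal{F}(F_{t})^{1/2}\|+\|F_{s}\big(\mathcal{F}(F_{t})^{1/2}-\mathcal{F}(F_{s})^{1/2}\big)\|,$$
the first summand is at most $\|F_{t}-F_{s}\|\,\|\mathcal{F}(F_{t})^{1/2}\|\leq\sigma(t,s)$, since $0\leq\mathcal{F}(F_{t})\leq 1$ forces $\|\mathcal{F}(F_{t})^{1/2}\|\leq 1$, while the second summand is at most $\|\mathcal{F}(F_{t})^{1/2}-\mathcal{F}(F_{s})^{1/2}\|$. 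Here a plain Lipschitz estimate is unavailable: the operator square root on positive elements of the C*-algebra $B(E)$ is not Lipschitz, only H\"older of exponent $1/2$. This is precisely why one should not expect the two metrics to be Lipschitz comparable, yet it is still enough for the topological comparison we want. Concretely I would invoke the operator inequality $\|X^{1/2}-Y^{1/2}\|\leq\|X-Y\|^{1/2}$, valid for positive $X,Y$ (a standard consequence of the operator concavity of $t\mapsto t^{1/2}$), applied to $X=\mathcal{F}(F_{t})$ and $Y=\mathcal{F}(F_{s})$; combined with the bound of the previous paragraph this yields $\|\mathcal{F}(F_{t})^{1/2}-\mathcal{F}(F_{s})^{1/2}\|\leq(2\,\sigma(t,s))^{1/2}$. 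Alternatively, since all the operators $\mathcal{F}(F_{t})$ lie in the bounded set $\{X\in B(E):0\leq X\leq 1\}$, uniform approximation of $\sqrt{\cdot}$ by polynomials on $[0,1]$ shows directly that $X\mapsto X^{1/2}$ is uniformly norm-continuous there, which is all that is required.

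Assembling the three estimates produces an inequality of the shape $d(t,s)\leq\max\{\,2\,\sigma(t,s),\ \sigma(t,s)+(2\,\sigma(t,s))^{1/2}\,\}$, so that $d(t,s)\to 0$ whenever $\sigma(t,s)\to 0$. Hence the identity map from the Riesz metric to the gap metric is continuous, i.e. the Riesz topology is stronger than the gap topology. I expect the only genuine difficulty to be the square-root term, the other two reducing to elementary C*-norm manipulations; and it is exactly the non-Lipschitz nature of that term which leaves open (and consistent with the claim of strict inequality made in the introduction) the possibility that the two topologies differ.
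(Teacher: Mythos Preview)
Your proof is correct and follows essentially the same approach as the paper: both use the reformulation~(\ref{gaptwo}) of the gap metric via bounded transforms and control each of the three terms by $\|F_{t}-F_{s}\|$, handling the square-root term via norm-continuity of $X\mapsto X^{1/2}$ on positive elements. Your version is in fact more quantitative---you obtain an explicit modulus of uniform continuity via the H\"older bound $\|X^{1/2}-Y^{1/2}\|\leq\|X-Y\|^{1/2}$---whereas the paper argues sequentially and simply invokes ``continuity of $g(x)=\sqrt{x}$,'' but the underlying strategy is identical.
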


\begin{proof}Let $\{t_{n}\}$ be a sequence in $R(E,F)$ that is
convergent to a regular operator $t$ with respect to the Riesz
topology, i.e. $ \sigma(t_{n},t)=\|F_{t_n}-F_{t} \| \rightarrow
0$. By elementary methods and continuity of the function
$g(x)=\sqrt{x}$ on $[0,+\infty)$ we can deduce
\begin{align*}
&\|\mathcal{F}(F_{t_n})-\mathcal{F}(F_{t})\|~=\|F_{t_n}^*F_{t_n}-F_{t}^*F_{t}\|
\rightarrow 0 \,,\\
&\|\mathcal{F}(F_{t_n}^*)-\mathcal{F}(F_{t}^*)\|=\|F_{t_n}F_{t_n}^*-F_{t}F_{t}^*\|
\rightarrow 0 \,,\\
&\|\mathcal{F}(F_{t_n})^{1/2}-\mathcal{F}(F_{t})^{1/2}\|\rightarrow 0 \,,\\
&\|F_{t_n}\mathcal{F}(F_{t_n})^{1/2}-F_{t}\mathcal{F}(F_{t})^{1/2}\|\rightarrow
0 \,.
\end{align*}
Therefore (\ref{gaptwo}) implies that
$$d(t_{n},t)= {\rm sup} \{ \, \|
\mathcal{F}(F_{t_{n}})-\mathcal{F}(F_{t})\|,
\|\mathcal{F}(F_{t_{n}}^*)-\mathcal{F}(F_{t}^*)\|,
\|F_{t_{n}}\mathcal{F}(F_{t_{n}})^{1/2}-F_{t}\mathcal{F}(F_{t})^{1/2}\|
\, \} \rightarrow 0,$$ i.e. the sequence $\{t_{n}\}$ is gap
convergent to the regular operator $t$.
\end{proof}

\begin{corollary} Let
$ \mathcal{B}=\{ T \in B(E,F):\| T \|\leq 1 \ \, {\rm and} \ \
Ran(1- T^* T ) \ {\rm is} \ {\rm dense} \ {\rm in} \ F \}$, then
the map $$( \mathcal{B}, \|.\|) \to (R(E,F),d)\, , \ F_t \mapsto
t= F_{t}(1-F_{t}^*F_{t})^{-1/2}$$ is bijective and continuous.
\end{corollary}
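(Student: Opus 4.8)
The plan is to recognize that both assertions follow almost immediately from results already in hand, so the real content is bookkeeping. For the bijectivity I would invoke the bijection $R(E,F) \to \mathcal{B}$, $t \mapsto F_t$, which is exactly the statement of Theorem 10.4 of Lance's book recalled above. The map in the corollary, $F_t \mapsto t = F_t(1-F_t^*F_t)^{-1/2}$, is precisely the set-theoretic inverse of this bijection (the formula $t = F_t(1-F_t^*F_t)^{-1/2}$ is the inversion formula already stated). Since the inverse of a bijection is again a bijection, this half requires nothing further.

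For the continuity I would first observe that the operator-norm distance between two elements of $\mathcal{B}$ is, by definition, nothing but the Riesz metric between the corresponding regular operators, namely $\|F_t - F_s\| = \sigma(t,s)$. Hence a sequence $\{F_{t_n}\}$ converging to $F_t$ in the norm of $\mathcal{B}$ is, verbatim, a sequence $\{t_n\}$ converging to $t$ in the Riesz topology on $R(E,F)$. The preceding Lemma asserts that Riesz convergence implies gap convergence, that is, $\sigma(t_n,t) \to 0$ forces $d(t_n,t) \to 0$. Combining these two observations, norm convergence in $\mathcal{B}$ forces gap convergence in $R(E,F)$, which is exactly the continuity of the map $F_t \mapsto t$ from the norm topology to the gap topology.

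I do not expect any genuine obstacle here, since the only analytic work—controlling the three terms of the equivalent gap metric (\ref{gaptwo}) via the continuity of $x \mapsto \sqrt{x}$ together with the identities $\mathcal{F}(F_t) = 1 - F_t^*F_t$ and $\mathcal{F}(F_t^*) = 1 - F_t F_t^*$—was already carried out inside the proof of the Lemma. The corollary is therefore a clean repackaging: it records that the inverse of the bounded-transform bijection is continuous when the source $\mathcal{B}$ carries the norm topology and the target $R(E,F)$ carries the weaker gap topology. The single point worth making explicit is the identification $\|F_t - F_s\| = \sigma(t,s)$, which matches the norm metric on $\mathcal{B}$ with the Riesz metric on $R(E,F)$; after that identification the Lemma supplies the rest.
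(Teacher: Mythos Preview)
Your proposal is correct and follows essentially the same route as the paper: the paper's one-line justification simply cites Theorem~10.4 of Lance for bijectivity and Lemma~2.5 for continuity, and your argument spells out exactly these two ingredients, including the identification $\|F_t-F_s\|=\sigma(t,s)$ that makes Lemma~2.5 applicable.
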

Bijectivity and continuity of the map are obtained from Theorem
10.4 of \cite{LAN} and Lemma 2.5.
Suppose $E$ is a Hilbert $ \mathcal{A}$-module, $UB(E)$ and
$SR(E)$ denote unitary elements of $B(E)$ and selfadjoint
elements of $R(E)$.

\begin{remark} Let $E$ be a Hilbert $ \mathcal{A}$-module and let $ t \in
SR(E)$. According to Lemmata 9.7, 9.8 of \cite{LAN}, the
operators $t \pm i$ are bijection (see also \cite[Proposition
6]{KK}). Then

\begin{equation*} \begin{split}
c_{t}: ~SR(E) & \longrightarrow \mathcal{C}= \{ U \in UB(E)~:~ 1-U
~ \text{has
dense range } \}, \\
 ~t \,   & \, \, \mapsto c_{t}=(t-i)(t+i)^{-1}.
\end{split}
\end{equation*}
is a bijection which is called the {\it Cayley transform} of $t$,
cf. \cite[Theorem 10.5]{LAN}. The Cayley transform $c_t$ can be
written as $c_t=1-2i(t+i)^{-1}$. Thus
$(t+i)^{-1}-(s+i)^{-1}=\frac{i}{2}(c_{t}-c_{s})$, for each $t,s
\in SR(E)$.
\end{remark}
\begin{corollary}\label{selfadjoint-gap}On the space $SR(E)$ the gap metric is uniformly
equivalent to the metric $ \widetilde{d}$ given by
\begin{equation} \label{gap-Cayly}
\widetilde{d}(t,s):= \|(t+i)^{-1}-(s+i)^{-1} \|
=\frac{1}{2}\|\,c_{t}-c_{s}\|, \  {\rm for~all~} t,s \in SR(E).
\end{equation}
\end{corollary}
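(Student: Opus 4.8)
The plan is to use selfadjointness to collapse the gap metric (\ref{gapone}) down to two quantities and then to express those quantities and the resolvent $W_t:=(t+i)^{-1}$ in terms of one another, so that a pair of elementary telescoping estimates yields two-sided linear bounds between $d$ and $\widetilde d$. Since $t=t^*$ forces $R_{t^*}=R_t$, the first two entries in the supremum defining $d$ coincide, so on $SR(E)$ we have $d(t,s)=\sup\{\|R_t-R_s\|,\ \|tR_t-sR_s\|\}$. The target is then the existence of constants making $\widetilde d(t,s)=\|W_t-W_s\|$ comparable to this reduced supremum.

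First I would record the algebraic relations among $W_t$, $R_t$ and $tR_t$. Because $t$ is selfadjoint one has $W_t^*=(t-i)^{-1}$, and on the range of $R_t=(1+t^2)^{-1}$ the cancellation $(t+i)(t-i)=1+t^2=R_t^{-1}$ holds, giving $W_t=(t-i)R_t$ and $W_t^*=(t+i)R_t$. Adding and subtracting these two identities produces the workhorses
\begin{equation*}
R_t=W_tW_t^*,\qquad tR_t=\tfrac12(W_t+W_t^*),\qquad W_t=tR_t-iR_t,
\end{equation*}
together with the bound $\|W_t\|\le 1$, which follows from $\|W_t\|^2=\|W_tW_t^*\|=\|R_t\|\le 1$.

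For the inequality $d\le 2\widetilde d$ I would treat the two surviving gap quantities separately. Writing $R_t-R_s=(W_t-W_s)W_t^*+W_s(W_t^*-W_s^*)$ and using $\|W_t^*\|,\|W_s\|\le 1$ gives $\|R_t-R_s\|\le 2\|W_t-W_s\|$; and $tR_t-sR_s=\tfrac12\big[(W_t-W_s)+(W_t^*-W_s^*)\big]$ gives $\|tR_t-sR_s\|\le \|W_t-W_s\|$, so taking the supremum yields $d(t,s)\le 2\widetilde d(t,s)$. For the reverse bound I would use the third identity above: $W_t-W_s=(tR_t-sR_s)-i(R_t-R_s)$, whence $\widetilde d(t,s)\le \|tR_t-sR_s\|+\|R_t-R_s\|\le 2d(t,s)$. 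Combining the two gives $\tfrac12\widetilde d\le d\le 2\widetilde d$, which is exactly uniform (in fact Lipschitz) equivalence.

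The main obstacle I expect is not the norm estimates, which are routine once the identities are in place, but the careful handling of domains in the second step: the products $(t+i)(t-i)$ and $(t-i)R_t$ each involve an unbounded operator, so I must verify that $R_t$ genuinely maps $E$ into $Dom(t^2)$, that the cancellation $(t+i)(t-i)=1+t^2$ is valid there, and that $(t-i)R_t$ truly equals the bounded operator $(t+i)^{-1}$ rather than merely agreeing with it on a dense submodule. The bijectivity of $t\pm i$ recorded in the preceding remark is what closes this gap: once one checks $(t+i)\big[(t-i)R_t\big]=1$ on all of $E$, injectivity of $t+i$ forces $(t-i)R_t=W_t$ globally, and everything after that is bounded-operator bookkeeping.
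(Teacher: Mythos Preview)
Your proof is correct and follows essentially the same route as the paper's: collapse the gap metric on $SR(E)$ to $\sup\{\|R_t-R_s\|,\|tR_t-sR_s\|\}$ via selfadjointness, relate these quantities to $(t\pm i)^{-1}$, and read off two-sided linear bounds. The only difference is that the paper uses the additive identity $R_t=\tfrac{1}{2i}\bigl((t-i)^{-1}-(t+i)^{-1}\bigr)$ rather than your multiplicative $R_t=W_tW_t^*$, which yields the slightly sharper constants $\tfrac12\,\widetilde d\le d\le\widetilde d$ in place of your $\tfrac12\,\widetilde d\le d\le 2\,\widetilde d$.
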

\begin{proof}For each $t,s \in SR(E)$, the expression (\ref{gapone}) can be
written as follows:
\begin{equation} \label{gapthr}
d(t,s)= {\rm sup} \{ \, \|R_{t}-R_{s} \|,\|tR_{t}-sR_{s} \| \, \}.
\end{equation}
On the other hand the operators $t \pm i$ are bijective, hence the
identities  $(t-i)^{-1}=(t+i)(t^2+1)^{-1}=tR_t+iR_t \,, \
(t+i)^{-1}=(t-i)(t^2+1)^{-1}=tR_t-iR_t$ hold, which yield
\begin{equation}\label{gapfor}\begin{split}
  R_t&=\frac{1}{2i}\bigl((t-i)^{-1}-(t+i)^{-1}\bigr),\\
  tR_t&=\frac 12\bigl((t-i)^{-1}+(t+i)^{-1}\bigr).
\end{split}
\end{equation}
Now from (\ref{gap-Cayly}), (\ref{gapthr}) and (\ref{gapfor}) we
infer that $ \frac{1}{2} \widetilde{d}(t,s) \leq d(t,s) \leq
\widetilde{d}(t,s)$, for all $t,s \in SR(E)$.
\end{proof}

The following example attributed to Fuglede is used to show that
the Riesz and gap metrics are different, cf. \cite{BLP, NIC}.

\begin{example}\label{selfadjoint-gap.EX}let $\mathcal{A}$ be unital
C*-algebra and $H_{ \mathcal{A}}$ be the standard Hilbert $
\mathcal{A}$-module which is countably generated by orthonormal
basis $ \xi _{j}=(0,...,0,1,0,...,0),~ j \in \mathbb{N}$. For
every integer $n \geq 0$ we define $ t_{n}:Dom(t_{n})= \{ \, \sum
\lambda_{j} \xi_{j}: \  \sum \, j^2 \, |\lambda _{j}|^{2} < +
\infty \} \subseteq H_{ \mathcal{A}} \longrightarrow  H_{
\mathcal{A}}$ by
\[ \qquad t_{n}( \xi_{j})=
\begin{cases}
~j \xi_{j}~ & \text{ if   $j \neq n$,}\\
-j \xi_{j}~ & \text{ if $j= n$}\ .
\end{cases}
\]
The sequence $t_{n}$ of selfadjoint regular operator converges to
the selfadjoint regular operator $t_0$ in gap topology. To see
this, we apply (\ref{gap-Cayly}) and get
$$\widetilde{d}(t_n,t_0)=\|(t_n+i)^{-1}-(t_0+i)^{-1} \|=
\|(t_n+i)^{-1}\xi_{n}-(t_0+i)^{-1} \xi_{n} \|=|\frac{1}{i-n}-
\frac{1}{i+n}| \rightarrow 0,~ \text{as}~ n \to \infty .$$  But
$$ \sigma (t_n,t_0) \geq \|F_{t_n} \xi_{n} -F_{t_0} \xi_{n} \|=
\frac{2n}{ \sqrt{1+n^2} } \rightarrow 2.$$ In  view of Corollary
\ref{selfadjoint-gap} this shows that the Riesz topology is
strictly stronger than the gap topology.
\end{example}
\section{On the gap topology}
Recall that every bounded adjointable operator is regular, that
is, for Hilbert C*-modules $E,~ F$, the space $B(E,F)$ can be
regarded as a subset of $R(E,F)$. We show that the space of all
bounded adjointable operators on Hilbert C*-modules is an open
dense subset of the space of all unbounded regular operators with
respect to the gap topology. Then we can conclude that the space
of odd bounded adjointable operators is a dense subset of odd
unbounded regular operators. The author believe that these
results are new even in the case of Hilbert spaces.

\begin{lemma} \label{gap-open} Let $E,F$ be Hilbert C*-modules.
Then the space $B(E,F)$ is open in $R(E,F)$ with respect to the
gap metric $d$.
\end{lemma}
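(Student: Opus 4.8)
The plan is to reduce the membership $s\in B(E,F)$ to the invertibility of a single positive operator and then to exploit the openness of the invertible group of the C*-algebra $B(E)$. By Corollary 2.1 (the characterization $T\in B(E,F)\iff\|F_T\|<1$), I would first recast boundedness as a statement about $R_T=(1+T^*T)^{-1}=1-F_T^*F_T$. Indeed, $\|F_T\|<1$ is equivalent to $F_T^*F_T\leq\|F_T\|^2\,1<1$, that is, to the uniform lower bound $R_T\geq(1-\|F_T\|^2)\,1>0$; since $0\leq R_T\leq 1$ is selfadjoint, such a lower bound means precisely that $R_T$ is invertible in $B(E)$. The same equivalence, applied to an arbitrary $s\in R(E,F)$, shows that $s\in B(E,F)$ if and only if $R_s$ is invertible.

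Now fix $T\in B(E,F)$ and put $c=\|R_T^{-1}\|$. The key geometric input is that the gap metric dominates the distance between the $R$-operators: reading off the supremum in \eqref{gapone},
$$d(T,s)\geq\|R_T-R_s\|\qquad\text{for every }s\in R(E,F).$$
Consequently, if $d(T,s)<1/c$ then $\|R_T-R_s\|<\|R_T^{-1}\|^{-1}$, and a standard Neumann-series argument yields invertibility of $R_s$: writing $R_s=R_T\bigl(1-R_T^{-1}(R_T-R_s)\bigr)$ with $\|R_T^{-1}(R_T-R_s)\|\leq c\,\|R_T-R_s\|<1$, both factors are invertible in $B(E)$. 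By the equivalence of the previous paragraph this forces $s\in B(E,F)$. Hence the open gap ball of radius $1/c$ about $T$ lies entirely in $B(E,F)$, which is therefore open in $(R(E,F),d)$.

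The one point I regard as the crux is the passage from $\|F_T\|<1$ to the \emph{strict} lower bound $R_T\geq\delta\,1$: it is this uniform positivity, rather than the mere injectivity or dense range of $1+T^*T$, that makes $R_T$ genuinely invertible and thereby opens the door to the perturbation step. Once that is secured, the remainder is routine—the stability of invertibility under small norm perturbations in the C*-algebra $B(E)$, combined with the elementary domination $d(T,s)\geq\|R_T-R_s\|$ extracted from the reformulated gap metric \eqref{gapone}.
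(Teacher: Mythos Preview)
Your proof is correct and follows essentially the same route as the paper's. Both arguments isolate the component $\|R_T-R_s\|$ of the gap metric \eqref{gapone} and combine it with Corollary~2.1; the paper carries out the perturbation step by the bare norm estimate $\|F_s\|^2\le\|F_T\|^2+\|R_T-R_s\|<1$, while you repackage the same inequality as the invertibility of $R_s$ via a Neumann series---and indeed the radius $\|R_T^{-1}\|^{-1}$ you use equals $1-\|F_T\|^2$, which is the paper's $\delta$.
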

\begin{proof}Let $S \in B(E,F)$ then $ \| F_{S} \|^2<1$  by
Corollary 2.1, and so, there is a real number $ \delta $ such that
$0 < \delta < 1-\| F_{S} \|^2<1$. We claim $ \{T \in
R(E,F)~:~d(T,S) < \delta \} \subseteq B(E,F)$. Let $T$ be a
(possibly unbounded) operator in  $R(E,F)$ and $d(T,S) < \delta$,
then
$$  \|F_{T}^*F_{T}\| - \|F_{S}^*F_{S}\| \leq
\|F_{T}^*F_{T}-F_{S}^*F_{S}\|=\|\mathcal{F}(F_{T})-\mathcal{F}(F_{S})\|
\leq d(T,S) < \delta .$$ That is, $ \|F_{T}  \|^2=\| F_{T}^*F_{T}
\|< \delta+\|F_{S}  \|^2 <1$, and $T$ is therefore bounded by
Corollary 2.1.
\end{proof}
\begin{proposition}The metric which is given by the usual norm of
bounded operator and the gap metric $d$ are equivalent on the
space of all bounded adjointable operators.
\end{proposition}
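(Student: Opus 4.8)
The plan is to show that the identity map on $B(E,F)$ is a homeomorphism between the norm topology and the gap topology, equivalently that a sequence $\{T_n\}$ in $B(E,F)$ converges to $T \in B(E,F)$ in norm if and only if it converges to $T$ in the gap metric. Throughout I work with the form (\ref{gapone}) of the gap metric, $d(T,S)=\sup\{\|R_T-R_S\|,\|R_{T^*}-R_{S^*}\|,\|TR_T-SR_S\|\}$, and I use repeatedly that on bounded adjointable operators the maps $T\mapsto T^*T$, $T\mapsto TT^*$ and multiplication are norm-continuous, and that inversion is continuous on the open set of invertible elements of $B(E)$.

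For the norm-to-gap direction, suppose $\|T_n-T\|\to 0$. Then $T_n^*T_n\to T^*T$ and $T_nT_n^*\to TT^*$ in norm, so $1+T_n^*T_n\to 1+T^*T$ and $1+T_nT_n^*\to 1+TT^*$; since these operators are all bounded below by $1$ they are invertible, and continuity of inversion gives $R_{T_n}\to R_T$ and $R_{T_n^*}\to R_{T^*}$ in norm. Continuity of multiplication then yields $T_nR_{T_n}\to TR_T$. Feeding these three limits into (\ref{gapone}) gives $d(T_n,T)\to 0$.

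The substantial direction is gap-to-norm. Suppose $T_n,T\in B(E,F)$ and $d(T_n,T)\to 0$; then in particular $\|R_{T_n}-R_T\|\to 0$ and $\|T_nR_{T_n}-TR_T\|\to 0$. The key point is that boundedness of $T$ makes $R_T=(1+T^*T)^{-1}$ invertible in $B(E)$, with inverse $1+T^*T$ (equivalently, by Corollary 2.1, $\|F_T\|<1$ forces $R_T=1-F_T^*F_T\geq (1-\|F_T\|^2)\,1>0$). Since the invertible elements form an open set on which inversion is continuous, for all large $n$ the operator $R_{T_n}$ is invertible with $R_{T_n}^{-1}\to R_T^{-1}$ in norm; in particular $\sup_n\|R_{T_n}^{-1}\|<\infty$. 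I now recover $T_n$ from the gap data through the identity $T_n=(T_nR_{T_n})R_{T_n}^{-1}$, and, multiplication being norm-continuous on norm-bounded sets, $T_n=(T_nR_{T_n})R_{T_n}^{-1}\to (TR_T)R_T^{-1}=T$ in norm. Hence the two topologies coincide on $B(E,F)$.

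The main obstacle is exactly this last direction: gap smallness controls only the bounded-transform data $R_T$ and $TR_T$, and one must invert $R_T$ to reconstruct $T$ itself. This is legitimate precisely because, for a bounded $T$, $R_T$ is bounded below and hence invertible in $B(E)$; the openness of the invertibles then transfers invertibility to the whole tail of the sequence and supplies the uniform bound on $\|R_{T_n}^{-1}\|$ needed for the product to pass to the limit.
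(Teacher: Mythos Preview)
Your proof is correct and follows essentially the same route as the paper: both hinge on the invertibility of $R_T=(1+T^*T)^{-1}$ for bounded $T$ and the reconstruction identity $T=(TR_T)R_T^{-1}$ to pass from gap data back to $T$. The only difference is one of packaging: the paper writes down explicit local Lipschitz estimates $M_2\|T-S\|\le d(T,S)\le M_1\|T-S\|$ with constants depending on $\|T\|,\|S\|$, whereas you argue sequentially via continuity of inversion on the open set of invertibles; both yield the same topological equivalence.
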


A similar result has been proved in the case of Hilbert spaces in
\cite[Addendum]{C-L}. Our argument seems to be shorter.
\begin{proof}Let $T,S \in B(E,F)$, we use the expression
(\ref{gapone}) to show that there exist real numbers $M_1,M_2$
such that $M_2 \|T-S \| \leq d(T,S) \leq M_1 \|T-S \|$. Since $
\|R_{T}  \| ,  \|R_{S}  \| \leq 1$, we have
\begin{eqnarray*}
\|R_T-R_S \|& \leq& \|R_T \| \, \|S^*S-T^*T \| \, \|R_S \| \\
            & \leq& \|S^*(S-T)+(S^*-T^*)T \| \\
            & \leq& \|S^* \| \, \|S-T \|+\| S^*-T^* \| \, \|T \| \\
            & = & ( \|T \|+\|S \|) \, \|T-S \|,\\
\|TR_T-SR_S \|& = &  \|(T-S)R_T+S(R_T-R_S) \| \\
            & \leq &\|T-S \|+\|S \| \, \|R_T-R_S \| \\
            & \leq &\|T-S \| \, (1+\| S \| \,(\| S \|+ \|T \|))
            \,.
\end{eqnarray*}
Similarly $\|R_{T^*}-R_{S^*} \| \leq ( \|T \|+\|S \|) \, \|T-S
\|$. Therefore the above inequalities imply that $d(T,S) \leq M_1
\|T-S \|$, where $M_1={\rm max} \, \{ \, \|T \|+\|S \| ~,~1+\| S
\| \,(\| S \|+ \|T \|) \}$. We know that $T-S=(T-S)R_{T}\,
R_{T}^{-1}$ and $(T-S)R_{T}=TR_T-SR_S+S(R_S-R_T)$, so we obtain
\begin{eqnarray*}
   \|T-S \| & = & \|(T-S)R_T \| \, \|R_{T}^{-1}\| \\
            & \leq &(1+\|T\|^{2})\|(T-S)R_T \| \\
            & \leq &(1+\|T\|^{2}) \, (\|TR_T-SR_S\|+ \|S \| \|R_S-R_T \|)\\
            & \leq &(1+\|T\|^{2}) \, (d(T,S)+ \|S \| \, d(T,S))\\
            & \leq &(1+\|T\|^{2}) \, (1+ \|S \|)\, d(T,S).
\end{eqnarray*}
Therefore $M_2 \|T-S \| \leq d(T,S)$, for $M_2=[(1+\|T
\|^2)(1+\|S\|)]^{-1}$.
\end{proof}
\begin{remark}Let $t \in R(E,F)$ and $F_{t}$ be its bounded
transform. Then $1-F_{t}^*F_{t}$ has dense range if and only if
$c1-F_{t}^*F_{t}$ has dense range for each real number $c\geq 1$
(cf. \cite[Lemma 10.1 and Corollary 10.2]{LAN}).
\end{remark}
\begin{theorem}\label{gap-dense}Let $E,F$ be Hilbert C*-modules, then $B(E,F)$
is an open dense subset of the space $R(E,F)$ with respect to the
gap topology.
\end{theorem}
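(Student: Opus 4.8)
The plan is to prove density, since openness has already been established in Lemma~\ref{gap-open}. The strategy is to exploit the bijection $t \mapsto F_t$ between $R(E,F)$ and $\mathcal{B}$ together with the boundedness criterion of Corollary~2.1: a regular operator is bounded precisely when the norm of its bounded transform is strictly less than $1$. Given an arbitrary $t \in R(E,F)$ with bounded transform $F_t$ (so $\|F_t\| \leq 1$), the idea is to approximate $t$ by scaling its bounded transform slightly below the unit ball.

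Concretely, for each real $r$ with $0 < r < 1$ I would consider $S_r := r F_t$, so that $\|S_r\| = r\|F_t\| \leq r < 1$. The first step is to verify that $S_r$ actually lies in $\mathcal{B}$, i.e. that $Ran(1 - S_r^* S_r) = Ran(1 - r^2 F_t^* F_t)$ is dense in $F$; this is exactly where Remark~3.3 is needed. Writing $1 - r^2 F_t^* F_t = r^2(r^{-2}\cdot 1 - F_t^* F_t)$ with $r^{-2} > 1$, the density of $Ran(1 - F_t^* F_t)$ (which holds because $t$ is regular) transfers to $Ran(c\cdot 1 - F_t^* F_t)$ for every $c \geq 1$ by Remark~3.3, hence to $Ran(1 - r^2 F_t^* F_t)$ since multiplication by the scalar $r^2$ leaves the range unchanged. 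Thus $S_r$ is the bounded transform of a unique regular operator $s_r$, and because $\|S_r\| < 1$, Corollary~2.1 forces $s_r \in B(E,F)$.

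It then remains to show $d(s_r, t) \to 0$ as $r \to 1^-$, for which I would invoke the reformulation (\ref{gaptwo}) of the gap metric through bounded transforms. The first two entries are immediate: $\mathcal{F}(S_r) - \mathcal{F}(F_t) = (1 - r^2) F_t^* F_t$ and $\mathcal{F}(S_r^*) - \mathcal{F}(F_t^*) = (1 - r^2) F_t F_t^*$, both of norm at most $1 - r^2 \to 0$ since $\|F_t\| \leq 1$. For the third entry one has $S_r \to F_t$ in norm, while $\mathcal{F}(S_r)^{1/2} = (1 - r^2 F_t^* F_t)^{1/2} \to (1 - F_t^* F_t)^{1/2} = \mathcal{F}(F_t)^{1/2}$ by norm-continuity of the square root on positive operators (the same continuity of $g(x)=\sqrt{x}$ already used in Lemma~2.5), so $S_r \mathcal{F}(S_r)^{1/2} \to F_t \mathcal{F}(F_t)^{1/2}$ by continuity of multiplication. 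Hence all three terms of (\ref{gaptwo}) vanish in the limit, giving $d(s_r, t) \to 0$ and exhibiting $t$ as a gap-limit of bounded adjointable operators.

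I expect the main obstacle to be the well-definedness step: ensuring that the scaled transform $rF_t$ still corresponds to a genuine regular operator, i.e. that the dense-range condition cutting out $\mathcal{B}$ survives the scaling. This is precisely what Remark~3.3 delivers, and without it the scaling trick could fall outside $R(E,F)$. The convergence estimates themselves are routine once norm-continuity of the square-root functional calculus in the C*-module setting is granted.
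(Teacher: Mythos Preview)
Your proposal is correct and follows essentially the same route as the paper: scale the bounded transform $F_t$ by a factor strictly less than $1$, use Remark~3.3 to keep the dense-range condition, invoke Corollary~2.1 to conclude boundedness of the corresponding regular operator, and then show gap convergence. The only cosmetic difference is that the paper uses the discrete scaling $\tfrac{n}{n+1}F_t$ and, rather than estimating the three terms of (\ref{gaptwo}) directly as you do, simply observes $\sigma(T_n,t)=\|\tfrac{n}{n+1}F_t-F_t\|\to 0$ and then appeals to Lemma~2.5 (Riesz stronger than gap) to finish.
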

\begin{proof}Let $t \in R(E,F)$ and $F_{t}$ be its bounded
transform. We set $P_{n}= \frac{n}{n+1} \, F_{t}$, for all $n \in
\mathbb{N}$. Then for every $n \geq 1$ the operator $P_{n}$ is
bounded and satisfies $\|P_{n}\| < 1$. The operator
$1-F_{t}^*F_{t}$ has dense range and so is $1-P_{n}^*P_{n}$ (cf.
Remark 3.3). By Theorem 10.4 of \cite{LAN}, for any natural number
$n$ there exists a regular operator $T_{n}$ such that $
F_{T_{n}}=P_{n}=\frac{n}{n+1} \, F_{t}$. Therefore $
\|F_{T_{n}}\|=\|P_{n}\|<1$, so $T_{n}$ will be in $B(E,F)$. We
also have $ \sigma(T_{n},t)= \|F_{T_{n}}-F_{t}\|=\| \frac{n}{n+1}
\,F_{t}-F_{t} \| \rightarrow 0$. Recall that the Riesz topology
was stronger than the gap topology, that is $d(T_{n},t)
\rightarrow 0$.  $B(E,F)$ is therefore dense in $R(E,F)$ with
respect to the gap topology. Openness of $B(E,F)$ was given in
Lemma 3.1.
\end{proof}
\begin{corollary}The uniform structures
induced by the gap metric and by the operator norm on the space
of bounded adjointable operators are different. This follows from
the fact that the metric which is given by the usual norm of
bounded operator is complete while the gap metric on the set of
bounded adjointable operators is not complete.
\end{corollary}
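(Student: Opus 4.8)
The plan is to read the corollary as the assertion that two topologically equivalent metrics can nonetheless carry different uniform structures, and to separate the claim into exactly the two completeness statements the corollary names, tied together by the general principle that \emph{completeness is a uniform invariant}. Concretely, if the gap metric $d$ and the operator norm induced the same uniform structure on $B(E,F)$, then the identity map would be a uniform homeomorphism, so the two metrics would have the same Cauchy sequences and the same convergent sequences; in particular $(B(E,F),d)$ would be complete precisely when $(B(E,F),\|\cdot\|)$ is. Hence it suffices to show that one of the two is complete and the other is not, and the inequality of the uniform structures follows immediately.

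First I would dispose of the easy half: $(B(E,F),\|\cdot\|)$ is complete. This is standard for adjointable operators on Hilbert C*-modules (cf.~\cite{LAN}); the only point worth recording is that $B(E,F)$ is closed under operator-norm limits, since the involution $T\mapsto T^*$ is isometric on adjointable operators, so whenever $\|T_n-T\|\to 0$ with each $T_n$ adjointable the adjoints $T_n^*$ form a Cauchy sequence whose limit $S$ satisfies $\langle Tx,y\rangle=\langle x,Sy\rangle$, giving $T^*=S$. Thus $(B(E,F),\|\cdot\|)$ is a Banach space.

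The substance is the second half: $(B(E,F),d)$ is not complete. Here I would invoke the density part of Theorem~\ref{gap-dense}. Assuming $R(E,F)\setminus B(E,F)\neq\varnothing$, fix a genuinely unbounded regular operator $t\in R(E,F)$; Theorem~\ref{gap-dense} then produces a sequence $\{T_n\}\subseteq B(E,F)$ with $d(T_n,t)\to 0$, and one may take concretely the operators whose bounded transforms are $F_{T_n}=\frac{n}{n+1}F_t$ as in that proof. Being $d$-convergent in the ambient metric space $(R(E,F),d)$, the sequence $\{T_n\}$ is $d$-Cauchy, hence $d$-Cauchy in $B(E,F)$. It cannot converge to any $S\in B(E,F)$ in the gap metric, for if it did, uniqueness of limits in the metric space $(R(E,F),d)$ would force $S=t$, contradicting $t\notin B(E,F)$. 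Therefore $(B(E,F),d)$ is not complete, and combining this with the previous paragraph through the uniform-invariance principle yields the corollary.

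The main obstacle, or rather the one hypothesis to make explicit, is the nonemptiness of $R(E,F)\setminus B(E,F)$: the whole argument rests on having a witnessing unbounded regular operator, which is exactly what the Fuglede-type construction of Example~\ref{selfadjoint-gap.EX} (or the bounded-transform construction inside Theorem~\ref{gap-dense}) supplies in the infinite-dimensional setting of interest. I would close by remarking that there is no tension with Proposition 3.2: that result gives only a topological equivalence whose bi-Lipschitz constants $M_1,M_2$ depend on the operator norms of $T,S$ and degenerate as $\|T\|,\|S\|\to\infty$, so the identity map $(B(E,F),\|\cdot\|)\to(B(E,F),d)$ is a homeomorphism but not a uniform homeomorphism — which is precisely the distinction the corollary isolates.
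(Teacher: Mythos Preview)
Your argument is correct and follows exactly the route the paper intends: the corollary in the paper carries its own justification in the statement (norm-completeness of $B(E,F)$ versus gap-incompleteness, the latter coming from the density Theorem~\ref{gap-dense}), and you have simply fleshed out those two halves together with the uniform-invariance principle. Your explicit flagging of the hidden hypothesis $R(E,F)\setminus B(E,F)\neq\varnothing$ is a genuine improvement over the paper, which leaves this unspoken.
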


\begin{lemma}Let $E$ be a Hilbert $ \mathcal{A}$-module and $t \in R(E)$.
Then $ \hat{t}: Dom( \,\hat{t} \,)=Dom(t) \times Dom(t^*)
\subseteq E \oplus E \longrightarrow E \oplus E, \ \hat{t}=
\left(\begin{array}{cc}0 & t^* \\t & 0
\end{array}\right) $ is a selfadjoint regular operator on the
Hilbert $ \mathcal{A}$-module $E \oplus E$.
\end{lemma}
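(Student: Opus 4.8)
The plan is to sidestep a direct verification of closedness and a hands-on computation of the adjoint of $\hat t$, and instead to produce $\hat t$ through the bounded-transform bijection of Theorem 10.4 of \cite{LAN} onto the set $\mathcal B$. First I would write down the natural candidate for the bounded transform, namely the manifestly selfadjoint element
$$
G=\begin{pmatrix} 0 & F_t^{*} \\ F_t & 0 \end{pmatrix}\in B(E\oplus E),
$$
where $F_t$ is the bounded transform of $t$ and I have used $F_t^{*}=F_{t^*}$. Its square is the block-diagonal operator
$$
G^{2}=\begin{pmatrix} F_t^{*}F_t & 0 \\ 0 & F_tF_t^{*}\end{pmatrix},
$$
so that $\|G\|^{2}=\|G^{2}\|=\max\{\|F_t^{*}F_t\|,\|F_tF_t^{*}\|\}=\|F_t\|^{2}\le 1$.

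Next I would check that $G$ belongs to $\mathcal B$, i.e. that $Ran(1-G^{*}G)=Ran(1-G^{2})$ is dense. From the displayed form of $G^{2}$ and the identities $1-F_t^{*}F_t=R_t$ and $1-F_tF_t^{*}=R_{t^*}$ recorded earlier, one has
$$
1-G^{2}=\begin{pmatrix} R_t & 0 \\ 0 & R_{t^*}\end{pmatrix},
$$
whose range is $Ran(R_t)\oplus Ran(R_{t^*})$. The first summand is dense because $t$ is regular and the second is dense because $t^{*}\in R(E)$ is regular as well; hence $Ran(1-G^{2})$ is dense in $E\oplus E$ and $G\in\mathcal B$. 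By the bijection $R(E\oplus E)\to\mathcal B$ there is then a unique $s\in R(E\oplus E)$ with $F_s=G$, and since $F_s=G$ is selfadjoint, so is $s$.

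It remains to identify $s$ with the explicit operator $\hat t$. Here I would use the inversion formula $s=F_s(1-F_s^{*}F_s)^{-1/2}=G(1-G^{2})^{-1/2}$ and multiply out the blocks, noting that $(1-G^{2})^{-1/2}$ is block-diagonal with entries $(1-F_t^{*}F_t)^{-1/2}$ and $(1-F_tF_t^{*})^{-1/2}$. The $(2,1)$ entry of the product is $F_t(1-F_t^{*}F_t)^{-1/2}=t$, while the $(1,2)$ entry is $F_t^{*}(1-F_tF_t^{*})^{-1/2}=t^{*}$, the latter being the inversion formula applied to $t^{*}$ together with $F_{t^*}=F_t^{*}$. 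Thus $s=\hat t$ as operators. For the domains, $Dom(s)=Ran\big((1-G^{2})^{1/2}\big)=Ran(Q_t)\oplus Ran(Q_{t^*})=Dom(t)\times Dom(t^*)$, which matches the domain stated for $\hat t$; whence $\hat t=s$ is a selfadjoint regular operator on $E\oplus E$.

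The overwhelming majority of this argument is routine $2\times 2$ block algebra, and the only step that needs genuine care is the final identification: the factors $(1-G^{2})^{-1/2}$ are unbounded, so the equalities $t=F_t(1-F_t^{*}F_t)^{-1/2}$ and $t^{*}=F_t^{*}(1-F_tF_t^{*})^{-1/2}$ must be read as equalities of (possibly unbounded) operators \emph{together with} their domains, not merely as manipulations of bounded operators. Once this domain bookkeeping is in place, the regularity and selfadjointness of $\hat t$ are inherited from those of $s$ with no further work.
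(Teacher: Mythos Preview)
Your proof is correct and takes a genuinely different route from the paper. The paper argues directly: $\hat t$ is densely defined and closed because $t$ and $t^*$ are; an inner-product computation on $Dom(t)\times Dom(t^*)$ shows that $\hat t$ is symmetric there; and $1+\hat t^{\,2}$ is block-diagonal with entries $1+t^*t$ and $1+tt^*$, each with dense range by regularity of $t$ and $t^*$. You instead produce $\hat t$ as the preimage under the bounded-transform bijection of the explicitly selfadjoint element $G\in\mathcal B$, so that regularity and selfadjointness are inherited automatically from Theorem~10.4 of \cite{LAN}, leaving only the block identification $s=G(1-G^{2})^{-1/2}=\hat t$ together with the domain check $Ran((1-G^{2})^{1/2})=Dom(t)\times Dom(t^*)$. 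Your route cleanly bypasses the passage from symmetry to full selfadjointness (the containment $Dom(\hat t^{\,*})\subseteq Dom(\hat t)$, which the paper's inner-product computation does not by itself establish), at the modest cost of leaning on the bounded-transform machinery already developed in Section~2; the paper's argument is more elementary and self-contained but leaves that domain equality implicit.
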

\begin{proof}$t$ and $t^*$ are densely defined closed
operators and so is $ \hat{t}$. For each $(x,y),(u,v) \in Dom(\,
\hat{t}\,)$ we have
\begin{eqnarray*}
\langle \hat{t} \,(x,y),(u,v) \rangle=\langle (t^*y,tx),(u,v)
\rangle = \langle t^*y,u \rangle + \langle tx,v \rangle
&=& \langle y,t^{**}u \rangle+ \langle x,t^*v \rangle   \\
&=& \langle x,t^*v \rangle + \langle y,tu \rangle       \\
&=& \langle (x,y),(t^*v,tu) \rangle \,.
\end{eqnarray*}
Consequently, $Dom(\, \hat{t}\,)= Dom(\, \hat{t}^{*}\,)$ and $
\hat{t}^{*}= \hat{t}$. The operator $t$ is regular and so is
$t^*$, therefore the range of the operator
$$1+ \hat{t}^{*} \hat{t}=1+\hat{t}
^{2}=\left(\begin{array}{cc}1+t^*t & 0
\\0 & 1+tt^* \end{array}\right) $$ is dense in $ E \oplus E$.
That is, $ \hat{t}$ is a selfadjoint regular operator on $ E
\oplus E$.
\end{proof}

Let $E$ be a Hilbert $ \mathcal{A}$-module, then the operators of
the sets
$$SR(E \oplus E)_{odd}:= \{\left(\begin{array}{cc}0 & t^* \\t & 0
\end{array}\right)~:~ t \in R(E) \},$$
$$SB(E \oplus E)_{odd}:= \{\left(\begin{array}{cc}0 & T^* \\T & 0
\end{array}\right)~:~ T \in B(E) \},$$
are called {\it odd unbounded regular} and {\it odd bounded
adjointable} operators on the $ \mathbb{Z}/2$-graded Hilbert $
\mathcal{A}$-module $E \oplus E$, respectively. Odd operators
appear in Kasparov's KK-theory.

\begin{proposition}\label{gap-odd1}The map which associates to a regular operator
$t \in R(E)$ the selfadjoint operator $\hat{t}=
\left(\begin{array}{cc}0 & t^* \\t & 0 \end{array}\right)$ is an
isometric map from $R(E)$ onto $SR(E \oplus E)_{odd}$ with respect
to the gap metric, i.e. $d(t,s)=d( \, \hat{t}, \hat{s} \,)$, for
all $t,s \in R(E)$.
\end{proposition}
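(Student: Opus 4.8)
The plan is to evaluate both sides of $d(t,s)=d(\hat t,\hat s)$ through the equivalent description (\ref{gapone}) of the gap metric, and to exploit the selfadjointness of $\hat t$ to collapse the redundant terms. First I would record, exactly as in the proof of Lemma 3.7, that $1+\hat t^2=\bigl(\begin{smallmatrix}1+t^*t & 0\\ 0 & 1+tt^*\end{smallmatrix}\bigr)$, so that $R_{\hat t}=(1+\hat t^2)^{-1}$ is the diagonal operator $\bigl(\begin{smallmatrix}R_t & 0\\ 0 & R_{t^*}\end{smallmatrix}\bigr)$ on $E\oplus E$. Multiplying on the left by $\hat t$ then gives the anti-diagonal operator
$$\hat{t}R_{\hat{t}}=\begin{pmatrix}0 & t^*R_{t^*}\\ tR_t & 0\end{pmatrix}.$$
Since $\hat t^*=\hat t$, the first and second supremands in (\ref{gapone}) applied to $\hat t,\hat s$ coincide, so $d(\hat t,\hat s)=\sup\{\|R_{\hat t}-R_{\hat s}\|,\ \|\hat t R_{\hat t}-\hat s R_{\hat s}\|\}$.

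Next I would compute these two norms from the block forms of the differences, namely the diagonal operator $R_{\hat t}-R_{\hat s}=\bigl(\begin{smallmatrix}R_t-R_s & 0\\ 0 & R_{t^*}-R_{s^*}\end{smallmatrix}\bigr)$ and the anti-diagonal operator $\hat t R_{\hat t}-\hat s R_{\hat s}=\bigl(\begin{smallmatrix}0 & t^*R_{t^*}-s^*R_{s^*}\\ tR_t-sR_s & 0\end{smallmatrix}\bigr)$. The crucial ingredient is the pair of norm identities valid in $B(E\oplus E)\cong M_2(B(E))$: a diagonal operator $\bigl(\begin{smallmatrix}A & 0\\ 0 & B\end{smallmatrix}\bigr)$ and an anti-diagonal operator $N=\bigl(\begin{smallmatrix}0 & B\\ A & 0\end{smallmatrix}\bigr)$ both have norm $\max\{\|A\|,\|B\|\}$. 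For the anti-diagonal case this follows from the C*-identity together with $N^*N=\bigl(\begin{smallmatrix}A^*A & 0\\ 0 & B^*B\end{smallmatrix}\bigr)$, and the diagonal case is immediate.

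Applying these gives $\|R_{\hat t}-R_{\hat s}\|=\max\{\|R_t-R_s\|,\|R_{t^*}-R_{s^*}\|\}$ and $\|\hat t R_{\hat t}-\hat s R_{\hat s}\|=\max\{\|tR_t-sR_s\|,\|t^*R_{t^*}-s^*R_{s^*}\|\}$. To finish I would use the identity $(tR_t)^*=t^*R_{t^*}$ recorded after (\ref{projection}), together with the isometry of the adjoint, to see that $\|t^*R_{t^*}-s^*R_{s^*}\|=\|(tR_t-sR_s)^*\|=\|tR_t-sR_s\|$, so the second supremand reduces to $\|tR_t-sR_s\|$. Substituting back, $d(\hat t,\hat s)$ becomes the supremum of $\|R_t-R_s\|$, $\|R_{t^*}-R_{s^*}\|$ and $\|tR_t-sR_s\|$, which is precisely $d(t,s)$ by (\ref{gapone}).

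The only step that is not mere algebraic bookkeeping is the justification of the two matrix-norm identities, and the main point to be careful about is that these must be established in the Hilbert C*-module setting rather than merely for Hilbert spaces; this is secured by the C*-isomorphism $B(E\oplus E)\cong M_2(B(E))$ and the computation of $N^*N$ above. Everything else is the routine evaluation of $R_{\hat t}$ and $\hat t R_{\hat t}$ in block form.
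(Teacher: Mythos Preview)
Your proof is correct and follows essentially the same route as the paper: compute $R_{\hat t}$ and $\hat t R_{\hat t}$ in block form, read off the diagonal and anti-diagonal norm identities, and collapse the gap-metric supremum using $\hat t^*=\hat t$ and $(tR_t)^*=t^*R_{t^*}$. You are in fact more explicit than the paper about why the diagonal and anti-diagonal block norms equal $\max\{\|A\|,\|B\|\}$, which the paper simply asserts.
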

\begin{proof}Clearly the map $t \rightarrow \hat{t}$ is a
bijection from $R(E)$ onto $SR(E \oplus E)_{odd}$. So it is enough
to check that the map preserves the gap distance. For this end we
have
\begin{eqnarray*}
1+\hat{t}^{*} \hat{t}=1+\hat{t} ^{ \,
2}&=&\left(\begin{array}{cc}1+t^*t & 0
\\0 & 1+tt^* \end{array}\right),\\
(1+\hat{t}^{ \, 2})^{-1}&=&\left(\begin{array}{cc}(1+t^*t)^{-1} &
0 \\0 & (1+tt^*)^{-1} \end{array}~\right),\\
\hat{t}(1+\hat{t}^{ \, 2})^{-1}&=&\left(\begin{array}{cc}0 &
t^*(1+tt^*)^{-1}\\t(1+t^*t)^{-1} & 0 \end{array}\right).
\end{eqnarray*}
Therefor we have $ \|R_{ \hat{t}}-R_{ \hat{s}} \|={\rm sup} \{ \,
\|R_{t}-R_{s} \|,\|R_{t^*}-R_{s^*}\| \, \}$ and $ \|tR_{t}-sR_{s}
\| = \|\hat{t}R_{\hat{t}}-\hat{s}R_{\hat{s}} \|$, for $t,s \in
R(E)$. We get
\begin{eqnarray*}
d(t,s)&=& {\rm sup} \{ \, \|R_{t}-R_{s} \|,\|R_{t^*}-R_{s^*}
\|,\|tR_{t}-sR_{s} \|  \, \}\\
      &=& {\rm sup} \{ \, \|R_{ \hat{t}}-R_{ \hat{s}} \|
      \,, \|\hat{t}R_{\hat{t}}-\hat{s}R_{\hat{s}} \| \, \}
       = d( \hat{t}, \hat{s}).
\end{eqnarray*}
\end{proof}
\begin{corollary}\label{gap-odd2}Let $E$ be a Hilbert C*-module, then $SB(E
\oplus E)_{odd}$ is dense in $SR(E \oplus E)_{odd}$ with respect
to the gap topology.
\end{corollary}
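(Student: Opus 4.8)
The plan is to transport the density statement of Theorem \ref{gap-dense} across the isometry constructed in Proposition \ref{gap-odd1}. Write $\Phi : R(E) \to SR(E \oplus E)_{odd}$ for the map $t \mapsto \hat{t}$. By Proposition \ref{gap-odd1} this map is a bijection that preserves the gap distance, $d(t,s) = d(\hat{t},\hat{s})$ for all $t,s \in R(E)$; being a bijective isometry, $\Phi$ is in particular a homeomorphism between $(R(E),d)$ and $(SR(E \oplus E)_{odd},d)$. So the whole statement will reduce to the single fact that $\Phi$ matches the bounded classes on the two sides.

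First I would check that $\Phi$ carries $B(E)$ exactly onto $SB(E \oplus E)_{odd}$. If $T \in B(E)$, then $T^* \in B(E)$ as well by the definition of $B(E)$, so $\hat{T} = \begin{pmatrix} 0 & T^* \\ T & 0 \end{pmatrix}$ lies in $B(E \oplus E)$ and is selfadjoint, whence $\hat{T} \in SB(E \oplus E)_{odd}$. Conversely, restricting a bounded $\hat{T}$ to the summands $E \oplus 0$ and $0 \oplus E$ shows that both $T$ and $T^*$ are bounded, so $T \in B(E)$. Thus $\Phi(B(E)) = SB(E \oplus E)_{odd}$, while of course $\Phi(R(E)) = SR(E \oplus E)_{odd}$.

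Then I would invoke Theorem \ref{gap-dense}, which gives that $B(E)$ is dense in $R(E)$ in the gap topology. Since a homeomorphism preserves closures, applying $\Phi$ yields
$$
\overline{SB(E \oplus E)_{odd}} = \overline{\Phi(B(E))} = \Phi\bigl(\overline{B(E)}\bigr) = \Phi(R(E)) = SR(E \oplus E)_{odd},
$$
which is exactly the assertion. In more concrete terms: given $\hat{t} \in SR(E \oplus E)_{odd}$, the sequence $T_n \in B(E)$ produced in the proof of Theorem \ref{gap-dense} with $d(T_n,t) \to 0$ gives $\hat{T}_n \in SB(E \oplus E)_{odd}$ with $d(\hat{T}_n,\hat{t}) = d(T_n,t) \to 0$ by Proposition \ref{gap-odd1}.

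I do not expect a genuine obstacle here, since both needed ingredients are already in hand. The only point deserving a line of care is the bookkeeping in the previous paragraph: one must verify that the gap isometry $\Phi$ sends the bounded class precisely onto the odd bounded class (and not onto some strictly larger set), so that density of $B(E)$ in $R(E)$ transfers verbatim to density of $SB(E \oplus E)_{odd}$ in $SR(E \oplus E)_{odd}$. This is immediate from $B(E)$ being closed under adjunction, as noted above.
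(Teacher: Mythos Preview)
Your proposal is correct and follows exactly the argument the paper intends: the corollary is stated without proof precisely because it is the immediate combination of Theorem~\ref{gap-dense} (density of $B(E)$ in $R(E)$) with the gap isometry $t \mapsto \hat{t}$ of Proposition~\ref{gap-odd1}, together with the elementary observation that this bijection carries $B(E)$ onto $SB(E \oplus E)_{odd}$.
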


By an arbitrary C*-algebra of compact operators $\mathcal{A}$ we
mean that $\mathcal{A}$=$c_{0}$-$ \oplus_{i \in I}\mathcal{K}
(H_{i})$, i.e. $\mathcal{A}$ is a $c_{0}$-direct sum of elementary
C*-algebras $\mathcal{K}(H_{i})$ of all compact operators acting
on Hilbert spaces $H_{i}, \ i \in I$ cf.~\cite[Theorem
1.4.5]{ARV}. Hilbert C*-modules over C*-algebras of compact
operators are generally neither self-dual nor countably
generated. However, they share many of their properties with
Hilbert spaces. Generic properties of these Hilbert C*-modules
have been studied by several authors, e.g. \cite{ARA, B-G, F-S,
FS2, GUL, N-S, PAL1, SCH}.    Let $\mathcal{A}$ be a C*-algebra,
then $\mathcal{A}$ is an arbitrary C*-algebra of compact
operators if and only if for every pair of Hilbert $
\mathcal{A}$-modules $E, F$, every densely defined closed
operator $t: Dom(t)\subseteq E \to F$ is regular (see \cite{F-S}).


The following results are borrowed from \cite{B-G, GUL}. Let
$\mathcal{A}$=$c_{0}$-$ \oplus_{i \in I}\mathcal{K} (H_{i})$ be a
C*-algebra of compact operators and $E$ be a Hilbert $
\mathcal{A}$-module. For each $i \in I$ consider the associated
submodule $E_i=\overline{ span \{ \mathcal{K} (H_{i})E \} }$.
Obviously, $\{E_i\}$ is a family of pairwise orthogonal closed
submodule of $E$ and it is well known (cf. \cite{SCH}) that $E$
admits a decomposition into the direct orthogonal sum $E=
\oplus_{i \in I} E_i$ as well as $E \oplus E=\oplus_{i \in I}
(E_i \oplus E_i)$. Suppose $t$ is a densely defined closed
operator on $E$ and $t_i:=t\, |_{Dom(t)\, \cap \, E_i}$, then
$G(t)=\oplus_{i \in I} G(t_i)$. This enable us to reduce our
attention to the case of a Hilbert C*-module over an elementary
C*-algebra $\mathcal{K}(H)$. Let $e \in \mathcal{K}(H)$ be an
arbitrary minimal projection and $E$ be a $
\mathcal{K}(H)$-module. Suppose $E_{e}:=e E=\{ex: \ x\in E \}$,
then $E_{e}$ is a Hilbert space with respect to the inner product
$(.,.)=trace\,(\langle.,.\rangle)$, which is introduced in
\cite{B-G}. Let $B(E)$ and $B(E_{e})$ be C*-algebras of all
bounded adjointable operators on Hilbert $ \mathcal{K}(H)$-module
$E$ and Hilbert space $E_{e}$, respectively. Baki\'c and
Gulja\v{s} have shown that the map $ \Psi : B(E) \to B(E_{e}) , \
\Psi(T)= T| _{E_{e}} $ is a $*$-isomorphism of C*-algebras
\cite[Theorem 5]{B-G}.

suppose $R(E)$ and $R(E_{e})$ are the spaces of densely defined
closed operators on Hilbert $ \mathcal{K}(H)$-module $E$ and
Hilbert space $E_{e}$, respectively. Then $ \psi : R(E) \to
R(E_{e}) , \ \psi(t)= t|_{E_{e}} $ is a bijection operation
preserving map of $R(E)$ onto $R(E_{e})$, cf. \cite[Theorem
1]{GUL}. By the restriction $t|_{E_{e}}$ of an operator $t \in
R(E)$ we mean the restriction of $t$ onto the subspace $eDom(t)$,
where $eDom(t) \subseteq E_{e}$ and $ \overline{eDom(t)}=E_{e}$.

\begin{theorem}\label{gap-isometric}Let $E$ be a Hilbert $\mathcal{K}(H)$-module and
$e \in \mathcal{K}(H)$ be any minimal projection. We equip $R(E)$
and $R(E_{e})$ with the gap metric, then $ \psi : R(E) \to
R(E_{e}) , \ \psi(t)= t_{e} $ is an isometric operation
preserving map of $R(E)$ onto $R(E_{e})$
\end{theorem}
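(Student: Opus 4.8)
The plan is to reduce the claimed isometry to two facts already available: that the restriction map $\Psi : B(E) \to B(E_e)$ is a *-isomorphism of C*-algebras, hence isometric and unital, and that $\psi$ is an operation preserving bijection that agrees with $\Psi$ on the bounded adjointable operators (both maps being given by restriction of an operator to $E_e$). Bijectivity and operation preservation are exactly \cite[Theorem 1]{GUL}, so the only new point is the metric identity $d(t,s) = d(t_e, s_e)$ for all $t,s \in R(E)$. I would work with the description (\ref{gapone}) of the gap metric, $d(t,s) = \sup\{\,\|R_t - R_s\|, \|R_{t^*} - R_{s^*}\|, \|tR_t - sR_s\|\,\}$, noting that the three operators $R_t = (1+t^*t)^{-1}$, $R_{t^*} = (1+tt^*)^{-1}$ and $tR_t$ all belong to $B(E)$, so that every term above is the norm of a difference of bounded adjointable operators.

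The key step is to transport these three operators through $\Psi$, that is, to prove
\[
\Psi(R_t) = R_{t_e}, \qquad \Psi(R_{t^*}) = R_{(t_e)^*}, \qquad \Psi(tR_t) = t_e R_{t_e}.
\]
Since $\psi$ preserves the operations and restricts to the unital *-isomorphism $\Psi$ on $B(E)$, one has $\psi(1+t^*t) = \psi(1) + \psi(t)^* \psi(t) = 1 + (t_e)^* t_e$; as $1+t^*t$ is regular and invertible with bounded inverse $R_t \in B(E)$, preservation of inverses yields $\Psi(R_t) = \psi\bigl((1+t^*t)^{-1}\bigr) = (1 + (t_e)^* t_e)^{-1} = R_{t_e}$. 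Applying the same reasoning to $t^*$, together with $\psi(t^*) = (t_e)^*$, gives the second identity, and multiplicativity of $\psi$ combined with $tR_t \in B(E)$ gives $\Psi(tR_t) = \psi(t)\psi(R_t) = t_e R_{t_e}$.

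Finally I would insert these identities into (\ref{gapone}) evaluated at $t_e, s_e$ and invoke that a *-isomorphism is isometric: for instance $\|R_{t_e} - R_{s_e}\| = \|\Psi(R_t - R_s)\| = \|R_t - R_s\|$, and similarly for the other two terms. Taking the supremum produces $d(t_e, s_e) = d(t,s)$, which is the asserted isometry, while operation preservation is already inherited from \cite{GUL}. The principal obstacle is the rigorous justification of the three compatibility identities, specifically that restriction to $E_e$ commutes both with forming the resolvent $(1+t^*t)^{-1}$ of the unbounded operator $t$ and with the mixed product $tR_t$ of an unbounded and a bounded operator. This is precisely the place where the operation preserving property of $\psi$ and its coincidence with $\Psi$ on $B(E)$ must be used, since one cannot manipulate the unbounded pieces directly; everything else is the formal, isometry-driven computation above.
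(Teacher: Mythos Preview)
Your proposal is correct and follows essentially the same route as the paper: cite \cite[Theorem~1]{GUL} for the bijection and operation preservation, use the formula (\ref{gapone}) for the gap metric, verify the three compatibility identities $\Psi(R_t)=R_{t_e}$, $\Psi(R_{t^*})=R_{(t_e)^*}$, $\Psi(tR_t)=t_eR_{t_e}$, and then invoke that the $*$-isomorphism $\Psi$ of \cite[Theorem~5]{B-G} is isometric. The only difference is expository: you derive the compatibility identities explicitly from the operation-preserving property of $\psi$, whereas the paper simply asserts $(R_t)|_{E_e}=R_{t_e}$ and $(tR_t)|_{E_e}=t_eR_{t_e}$ and proceeds directly to the norm equalities.
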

\begin{proof} $ \psi$ is a bijection operation preserving
map of $R(E)$ onto $R(E_{e})$ by \cite[Theorem 1]{GUL}. Let $t$
be a regular operator on $E$ and $t_e:=t|_{E_{e}}$, then
$(R_t)|_{E_{e}}$ and $(tR_{t})|_{E_{e}}$ are bounded operators on
the Hilbert space $E_{e}$ and $(R_{t}) |_{E_{e}}= R_{t _{e}}$ and
$(tR_{t})|_{E_{e}}= t_{e} \,R_{t _{e}}$. For $t,s \in R(E)$, the
equalities $\|R_{t}-R_{s} \| =\|R_{t _{e}}-R_{s _{e}}\| \,, ~
\|R_{t^*}-R_{s^*} \|=\|R_{t_{e}^*}-R_{s_{e}^*}\| \,, ~
\|tR_{t}-sR_{s} \|= \| t_{e} R_{t _{e}}- s_{e} R_{s_{e}}\| \,$
hold by utilizing \cite[Theorem 5]{B-G}. Therefore
$d(t,s)=d(\psi(t),\psi(s))$ as we required.
\end{proof}
The above theorem lifts the properties of the gap metric from the
space of densely defined closed operators on Hilbert spaces to
the space of densely defined closed operators on Hilbert
C*-modules over C*-algebras of compact operators.\\\

\section{Connectivity}
Unbounded Fredholm operators has been studied systematically in
the papers \cite{BLP, C-L, JOA} and the book \cite{KAT}. In this
section we use Theorem \ref{gap-isometric} to classify the
path-components of the set of regular Fredholm operators in
Hilbert $\mathcal{K}(H)$-modules.

Suppose $E$ is a Hilbert $ \mathcal{A}$-module. Recall that a
bounded operator $T\in B(E)$ is said to be {\it Fredholm} (or $
\mathcal{A}$-Fredholm) if there exists $G\in B(E)$ such that
$GT=TG=1 \ \ mod \ K(E)$.  Consider a regular operator $t$ on $E$.
An adjointable bounded operator $G\in B(E)$ is called a {\it
pseudo left inverse} of t if $Gt$ is closable and its closure $
\overline{Gt}$ satisfies $ \overline{Gt}\in B(E)$ and $
\overline{Gt}=1 \ \ mod \ K(E)$. Analogously $G$ is called a {\it
pseudo right inverse} if $tG$ is closable and its closure $
\overline{tG}$ satisfies $ \overline{tG}\in B(E)$ and $
\overline{tG}=1 \ \ mod \ K(E)$. The regular operator t is called
{\it Fredholm} (or $ \mathcal{A}$-Fredholm), if it has a pseudo
left as well as a pseudo right inverse. The regular operator $t$
is Fredholm if and only if $F_{t}$ is, cf. \cite{JOA, N-S}. For
the general theory of bounded and unbounded Fredholm operators on
Hilbert C*-modules we refer to \cite{GVF, JOA, WEG}. Let
$FredR(E)$ denote the space of regular Fredholm operators on $E$,
equipped with the gap topology, and let $FredSR(E)$ denote the
subspace consisting of the set selfadjoint regular Fredholm
operators.

Every two operators in $FredR(E)$ are called {\it homotopic} if
they are in the same path-connected component of $FredR(E)$.
It is natural to ask for a characterization of the path-components
of $FredR(E)$. This question was completely answered by Cordes
and Labrousse in \cite{C-L} in the case of Hilbert spaces.
\begin{theorem}\label{Cordes-Labrousse}Let $H$ be a Hilbert
space of infinite dimension. Every two operators in $FredR(H)$
have the same index if and only if they are homotopic, i.e. they
can be connected by a continuous path in $FredR(H)$.
\end{theorem}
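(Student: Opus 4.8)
The plan is to pass through the bounded transform and reduce the statement to the classical description of the path components of the space of \emph{bounded} Fredholm operators on $H$, using the machinery assembled in Sections~2 and~3. Throughout I use that a regular $t$ is Fredholm exactly when $F_t$ is, and that $\dim\ker t=\dim\ker F_t$ and $\dim\ker t^*=\dim\ker F_t^*$ (since $Q_t$ is an injective bounded bijection onto $\mathrm{Dom}(t)$), so that $\mathrm{index}(t)=\mathrm{index}(F_t)$.

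For necessity (homotopic operators share their index) I would first check that $FredR(H)$ is gap-open and that $\mathrm{index}(\cdot)$ is gap-locally constant. Gap convergence $t_n\to t$ forces, by the reformulation (\ref{gaptwo}), the norm convergences $F_{t_n}^*F_{t_n}=1-\mathcal{F}(F_{t_n})\to F_t^*F_t$ and $F_{t_n}F_{t_n}^*\to F_tF_t^*$. When $F_t$ is Fredholm its positive operator $F_t^*F_t$ has closed range, so $0$ is either absent from or isolated in $\sigma(F_t^*F_t)$, with the spectral projection onto a small interval $[0,\delta)$ of finite rank equal to $\dim\ker F_t$. Norm convergence of $F_{t_n}^*F_{t_n}$ then makes these spectral projections converge, giving $\dim\ker F_{t_n}=\dim\ker F_t$ for large $n$, and symmetrically for $F_tF_t^*$ on the cokernel; this yields both openness and local constancy. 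Being integer valued, the index is therefore constant along any continuous path, which is the desired direction.

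For sufficiency I would join each regular Fredholm operator to a bounded one without altering the index. Given $t\in FredR(H)$ with bounded transform $F_t$, let $t_\lambda$ be the regular operator with $F_{t_\lambda}=\lambda F_t$ for $\lambda\in(0,1]$. For $\lambda<1$ we have $\|\lambda F_t\|\le\lambda<1$, so $t_\lambda\in B(H)$ by Corollary~2.1, and $\mathrm{Ran}(1-\lambda^2F_t^*F_t)=H$; moreover $\lambda^2F_t^*F_t$ has the same kernel structure as $F_t^*F_t$, so $t_\lambda$ is Fredholm of the same index. The path is Riesz continuous, since $\sigma(t_\lambda,t_\mu)=|\lambda-\mu|\,\|F_t\|\le|\lambda-\mu|$, hence gap continuous by Lemma~2.5, and it joins $t=t_1$ to the bounded operator $t_{\lambda_0}$ within $FredR(H)$. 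Applying this to two regular Fredholm operators $t,s$ of equal index reduces the problem to joining two \emph{bounded} Fredholm operators $T,S$ of equal index by a gap-continuous path of bounded Fredholm operators.

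On the bounded part the gap metric and the operator norm induce the same topology by Proposition~3.2 (its equivalence constants are bounded on norm-bounded sets, so a norm continuous path is automatically gap continuous), whence it remains to connect $T$ and $S$ by a \emph{norm} continuous path of bounded Fredholm operators. This is the classical fact that $\pi_0$ of the bounded Fredholm operators on an infinite dimensional $H$ is $\mathbb{Z}$, realized by the index: one removes the finite dimensional kernel and cokernel by a finite rank homotopy to reach a standard model (an invertible operator when the index is $0$, a power of the unilateral shift otherwise) and then uses the connectedness of the invertible group of $H$. I expect the main obstacle to be exactly this bounded classification together with the Kuiper-type connectedness input it rests on; the genuinely new work is the bounded-transform reduction above, which transports the classical Hilbert-space statement and thereby reduces the theorem to \cite{C-L}.
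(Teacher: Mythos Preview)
The paper does not prove this theorem at all: it is quoted verbatim as the result of Cordes and Labrousse \cite{C-L} and then used as a black box (together with Theorem~\ref{gap-isometric}) to obtain Corollary~\ref{sharifi-compact-component}. So there is no in-paper argument to compare your sketch to beyond the citation.

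Your route is different in spirit and, pleasantly, uses the machinery the paper itself builds. The sufficiency half is clean: the Riesz-continuous deformation $F_{t_\lambda}=\lambda F_t$ lands in bounded Fredholm operators for $\lambda<1$ by Corollary~2.1, keeps the index, and is gap-continuous by Lemma~2.5; then Proposition~3.2 (with constants uniform on norm-bounded sets) lets you finish with the classical bounded $\pi_0(\mathrm{Fred}(H))\cong\mathbb{Z}$. Note, however, that your closing sentence ``reduces the theorem to \cite{C-L}'' is circular: \cite{C-L} \emph{is} the theorem you are proving. What you actually reduce to is the bounded Fredholm classification (Atiyah and predecessors), which is a separate, easier input.

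The necessity half has a genuine slip. From gap convergence you correctly extract $F_{t_n}^*F_{t_n}\to F_t^*F_t$ and $F_{t_n}F_{t_n}^*\to F_tF_t^*$ in norm, but this does \emph{not} force $\dim\ker F_{t_n}=\dim\ker F_t$: kernel dimension is only upper semicontinuous (take $F_t=0\oplus 1$ and $F_{t_n}=n^{-1}\oplus 1$). The spectral-projection argument gives constancy of the rank of $E_{[0,\delta)}(F_{t_n}^*F_{t_n})$, not of the kernel itself. What survives is exactly the index: a correct way to finish is to observe that the restriction of $F_{t_n}$ to $\mathrm{Ran}\,E_{[\delta,1]}(F_{t_n}^*F_{t_n})$ is bounded below and onto $\mathrm{Ran}\,E_{[\delta,1]}(F_{t_n}F_{t_n}^*)$, and then count dimensions of the complementary finite-rank pieces; alternatively invoke the classical gap-stability of the Fredholm index for closed operators (Kato, IV.5). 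With that repair your argument goes through.
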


Suppose $E$ is a Hilbert $ \mathcal{K}(H)$-module and $t$ is a
regular operator on $E$. Then $t$ is Fredholm if and only if the
range of $t$ is a closed submodule of $E$ and both $dim_{
\mathcal{K}} Ker (t)$, $dim_{ \mathcal{K}} Ker (t^*)$ are finite.
In this case we can define an index of $t$ formally, i.e. we can
define:
$$ ind \,t=dim_{ \mathcal{K}} Ker (t) - dim_{ \mathcal{K}} Ker ( t^*).$$
We refer to the publications
 \cite{B-G, MIN, N-S} for the proof of the
preceding results. More information about orthonormal Hilbert
bases can be found in \cite{ARA, F-L}.  Apply Theorems
\ref{gap-isometric}, \ref{Cordes-Labrousse} to get the following
fact.

\begin{corollary}\label{sharifi-compact-component}Let $E$ is a Hilbert
$\mathcal{K}(H)$-module. Every two operators in $FredR(E)$ have
the same index if and only if they are homotopic, i.e. they can
be connected by a continuous path in $FredR(E)$.
\end{corollary}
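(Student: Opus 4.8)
The plan is to transport the entire problem to a Hilbert space by means of the isometric operation-preserving bijection $\psi : R(E) \to R(E_{e})$ of Theorem~\ref{gap-isometric}, and then to invoke the Cordes--Labrousse classification, Theorem~\ref{Cordes-Labrousse}. Since $\psi$ is an isometry for the gap metric, it is in particular a homeomorphism of $R(E)$ onto $R(E_{e})$, so it carries continuous paths to continuous paths and induces a bijection between path-components. The whole argument therefore reduces to two checks: that $\psi$ matches up the distinguished subsets $FredR(E)$ and $FredR(E_{e})$, and that it preserves the index.

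First I would verify that $\psi$ restricts to a homeomorphism of $FredR(E)$ onto $FredR(E_{e})$. Recall that a regular operator $t$ is Fredholm if and only if its bounded transform $F_{t}$ is Fredholm (cf.~\cite{JOA, N-S}). The restriction map on bounded operators is precisely the $*$-isomorphism $\Psi : B(E) \to B(E_{e})$ of \cite[Theorem 5]{B-G}, and since $\Psi$ is a $*$-isomorphism it preserves the continuous functional calculus of the positive operator $1+t^{*}t$; combined with the identities $(R_{t})|_{E_{e}}=R_{t_{e}}$ and $(tR_{t})|_{E_{e}}=t_{e}R_{t_{e}}$ from the proof of Theorem~\ref{gap-isometric}, this gives $F_{\psi(t)}=\Psi(F_{t})$. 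As $\Psi$ carries the ideal $K(E)$ of compact operators onto $K(E_{e})$ (cf.~\cite{B-G}), it preserves the Fredholm property, so $t$ is Fredholm exactly when $t_{e}$ is, and $\psi$ maps $FredR(E)$ bijectively and bicontinuously onto $FredR(E_{e})$.

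Next I would show that $\psi$ preserves the index. Because $\psi$ is operation preserving we have $\psi(t^{*})=\psi(t)^{*}$, and restriction to $E_{e}$ sends the kernel of $t$ (resp.~$t^{*}$) onto that of $t_{e}$ (resp.~$t_{e}^{*}$), i.e.~$(Ker(t))_{e}=Ker(t_{e})$ and $(Ker(t^{*}))_{e}=Ker(t_{e}^{*})$. The $e$-reduction $E\mapsto E_{e}$ of Baki\'c and Gulja\v{s} identifies the $\mathcal{K}$-dimension of a closed submodule with the Hilbert-space dimension of its $e$-part, whence $dim_{\mathcal{K}}Ker(t)=dim\,Ker(t_{e})$ and likewise for the adjoints. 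Therefore
$$
ind\,t = dim_{\mathcal{K}}Ker(t) - dim_{\mathcal{K}}Ker(t^{*}) = dim\,Ker(t_{e}) - dim\,Ker(t_{e}^{*}) = ind\,\psi(t).
$$

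Assembling these pieces: for $t,s \in FredR(E)$ the homeomorphism $\psi$ shows that $t$ and $s$ are homotopic in $FredR(E)$ if and only if $\psi(t)$ and $\psi(s)$ are homotopic in $FredR(E_{e})$; by Theorem~\ref{Cordes-Labrousse}, applied to the Hilbert space $H=E_{e}$, the latter holds if and only if $ind\,\psi(t)=ind\,\psi(s)$, which by the index computation is equivalent to $ind\,t=ind\,s$. The step I expect to be the genuine obstacle is the index identity $dim_{\mathcal{K}}Ker(t)=dim\,Ker(t_{e})$: it is the one place where one must go beyond the formal transport of the gap metric and actually invoke the structure theory of Hilbert $\mathcal{K}(H)$-modules (closedness and finiteness of the relevant kernel submodules, and compatibility of $dim_{\mathcal{K}}$ with the $e$-reduction). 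One should also record the standing hypothesis that $E_{e}$ be infinite dimensional, as required by Theorem~\ref{Cordes-Labrousse}; in the finite-dimensional case every index vanishes and $FredR(E)$ is connected, so the equivalence holds trivially.
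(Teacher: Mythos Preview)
Your proposal is correct and follows exactly the route indicated in the paper, which proves the corollary in a single sentence by invoking Theorems~\ref{gap-isometric} and~\ref{Cordes-Labrousse}. You have simply spelled out the details the paper leaves implicit---that $\psi$ restricts to a homeomorphism $FredR(E)\to FredR(E_{e})$ and preserves the index, and that the infinite-dimensional hypothesis of Theorem~\ref{Cordes-Labrousse} must be handled---so the two arguments are the same in substance.
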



\begin{corollary}\label{Sharifi-compact}Suppose $E$ is a Hilbert $\mathcal{K}(H)$-module.
The space $FredSR(E)$ is path-connected and the space $FredR(E)$
is not path-connected.
\end{corollary}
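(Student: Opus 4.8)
The plan is to handle the two assertions separately, in each case transporting the problem to the Hilbert space $E_{e}$ through the isometric operation-preserving bijection $\psi:R(E)\to R(E_{e})$ of Theorem \ref{gap-isometric}. Since $\psi$ is a gap-metric homeomorphism that preserves adjoints, Fredholmness and the index (the same preservation that underlies Corollary \ref{sharifi-compact-component}), it restricts to homeomorphisms $FredR(E)\to FredR(E_{e})$ and $FredSR(E)\to FredSR(E_{e})$. Path-connectedness being a topological invariant, each statement is then equivalent to its counterpart for the Hilbert space $E_{e}$, where classical Hilbert-space theory is available.

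For the second assertion I would argue directly from Corollary \ref{sharifi-compact-component}, by which two regular Fredholm operators are homotopic exactly when they have equal index; it therefore suffices to produce two elements of $FredR(E)$ with different indices. The identity $1\in B(E)\subseteq FredR(E)$ has index $0$, while pulling a unilateral shift on the infinite-dimensional Hilbert space $E_{e}$ back through $\psi^{-1}$ yields a regular Fredholm operator of index $\pm1$. Hence $FredR(E)$ meets at least two distinct path-components and cannot be path-connected, in accordance with the standing infinite-dimensionality already assumed in Theorem \ref{Cordes-Labrousse}.

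For the first assertion the reduction above leaves me to show that $FredSR(E_{e})$ is path-connected for the gap topology on $E_{e}$. Every selfadjoint operator satisfies $Ker(t)=Ker(t^{*})$ and hence has index $0$, so $FredSR(E_{e})$ sits entirely inside the index-$0$ component of $FredR(E_{e})$; the catch is that Cordes--Labrousse only furnishes a connecting path inside $FredR(E_{e})$, whereas I need one that stays selfadjoint. To force a selfadjoint path I would move to the Cayley picture of Corollary \ref{selfadjoint-gap}: the map $t\mapsto c_{t}=(t-i)(t+i)^{-1}$ is a gap-to-norm homeomorphism of $SR(E_{e})$ onto the set of unitaries $U$ on $E_{e}$ for which $1-U$ has dense range, and under it the selfadjoint \emph{Fredholm} operators correspond precisely to those $U$ with $-1\notin\sigma_{\mathrm{ess}}(U)$, because $0\in\sigma_{\mathrm{ess}}(t)$ if and only if $-1\in\sigma_{\mathrm{ess}}(c_{t})$. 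The task thus becomes connecting two such unitaries by a norm-continuous path within this set.

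The genuine obstacle is this final step, and it is exactly where the gap topology is indispensable. The mechanism is already displayed in Example \ref{selfadjoint-gap.EX}: in the gap metric an eigenvalue may be pushed to $+\infty$ and recovered from $-\infty$ along a path that never leaves the Fredholm operators, which on the unitary side means sliding spectrum through the point $1$ (the image of the point at infinity of $t$, where no Fredholm condition is imposed) rather than through the forbidden point $-1$. Concretely I would use the spectral gap at $-1$ to first slide the finitely many eigenvalues near $-1$ away from it, and then apply the continuous functional calculus to deform the resulting unitary — whose spectrum now avoids a neighbourhood of $-1$ — to a fixed reference such as $iI$, keeping $1$ out of the point spectrum throughout so as never to leave the admissible set. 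This would show the relevant set of unitaries, hence $FredSR(E_{e})$ and then $FredSR(E)$, to be path-connected; this Hilbert-space connectedness is the phenomenon reconsidered in \cite{BLP, NIC}, and pushing a path back through $\psi^{-1}$ finishes the proof.
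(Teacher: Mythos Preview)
Your approach is correct and in fact more careful than the paper's own argument. The paper disposes of the corollary in one line: every selfadjoint Fredholm operator has index zero, so apply Corollary~\ref{sharifi-compact-component}. As you explicitly flag, that only furnishes a connecting path inside $FredR(E)$, not one that remains in $FredSR(E)$; the paper neither addresses this point nor says anything about the second assertion (non-path-connectedness of $FredR(E)$) beyond the implicit appeal to Corollary~\ref{sharifi-compact-component}.

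Your route --- transporting to the Hilbert space $E_{e}$ via Theorem~\ref{gap-isometric}, then passing to unitaries through the Cayley transform of Corollary~\ref{selfadjoint-gap} and deforming within the admissible set $\{U\in UB(E_{e}):1\notin\sigma_{p}(U),\ -1\notin\sigma_{\mathrm{ess}}(U)\}$ --- is the substantive argument actually required, and is essentially the one carried out in the references \cite{BLP, NIC} you invoke. The deformation sketch (first clear the finitely many eigenvalues near $-1$, then contract via functional calculus while keeping $1$ off the point spectrum) captures the correct mechanism; the care needed to maintain $1\notin\sigma_{p}$ along the entire path is exactly where one leans on those cited sources. Your treatment of the second assertion, exhibiting index $\pm1$ by pulling a unilateral shift back through $\psi^{-1}$, supplies the step the paper leaves to the reader.
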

For the proof, just recall that any element of $FredSR(E)$ has
zero index and then apply Corollary
\ref{sharifi-compact-component}.

We close the paper with the notification that the previous
corollary may fail for some other C*-algebra of coefficients. To
find an example one can use a result due to Joachim \cite[Theorem
3.5]{JOA}. Indeed, Joachim's theorem is a remarkable
generalization of a result due to Atiyah and Singer
\cite{Ati-Singer} which describes the space of regular (resp.,
selfadjoint regular) Fredholm operators on standard Hilbert
C*-modules over unital C*-algebra of coefficients. Mingo also
gave a description of path-components in the set of bounded
Fredholm operators on standard Hilbert modules \cite{MIN}.

{\bf Acknowledgement}: This research was supported by a grant of
Shahrood University of Technology. It was done during author's
stay at the Mathematisches Institut of the Westf\"{a}lische
Wilhelms-Universitat\"{a}t M\"{u}nster, Germany, in 2007. He
would like to express his thanks to professor Michael Joachim and
his colleagues in the topology and operator algebras groups for
warm hospitality. The author is also grateful to Professor M.
Frank and the referee for their careful reading and their useful
comments.


\end{document}